\newtheorem{thm}{Theorem}
\newtheorem{cor}[thm]{Corollary}
\newtheorem{lem}{Lemma}
\newtheorem{prop}{Proposition}
\newtheorem{exam}{Example}
\theoremstyle{remark}
\newtheorem{rem}{Remark}
\numberwithin{equation}{section}
\newcommand{\al}{\alpha}
\def\lz{\lambda}
\def\az{\alpha}
\def\dfrac{\displaystyle\frac}
\def\({\Bigl(}
\def \){ \Bigr)}
 \def\az{{\alpha}}
 \def\lz{{\lambda}}
 \def\RR{{\mathbb R}}
\begin{document}
\def\RR{\mathbb{R}}
\def\Exp{\text{Exp}}
\def\FF{\mathcal{F}_\al}

\title[Exact $L_2$ Bernstein-Markov inequalities] {Exact $L_2$ Bernstein-Markov inequalities for generalized weights}

\author[J. Li]{Jiansong Li} \address{ School of Mathematical Sciences, Capital Normal
University, Beijing 100048,
 China}
\email{cnuljs2023@163.com}

 \author[J. Geng]{Jiaxin Geng} \address{ School of Mathematical Sciences, Capital Normal
University, Beijing 100048,
 China.}
 \email{gengjiaxin1208@163.com}

\author[Y. Ling]{Yun Ling} \address{ School of Mathematical Sciences, Capital Normal
University, Beijing 100048,
 China.}
 \email{18158616224@163.com}

\author[H. Wang]{Heping Wang}
 \address{ School of Mathematical Sciences, Capital Normal
University, Beijing 100048,
 China}
\email{wanghp@cnu.edu.cn}

\keywords{$L_2$ Bernstein-Markov inequalities; Generalized Hermit weight; Generalized Gegenbauer weight; Dunkl operator}

\subjclass[2020]{33C45, 41A17, 41A44, 42C05}

\begin{abstract}
In this paper, we obtain some exact $L_2$ Bernstein-Markov
inequalities for  generalized Hermite and Gegenbauer weight. More
precisely, we determine the  exact values of the extremal problem
$$M_n^2(L_2(W_\lambda),{\rm D}):=\sup_{0\neq p\in\mathcal{P}_n}\frac{\int_I\left|{\rm D} p(x)\right|^2W_\lambda(x){\rm d}x}{\int_I| p(x)|^2W_\lambda(x){\rm d}x},\ \lambda>0,$$
where $\mathcal{P}_n$  denotes the set of all algebraic polynomials of degree at most $n$, ${\rm D}$ is the differential operator given by
$${\rm D}=\Bigg\{\begin{aligned}&\frac {\rm d}{{\rm d}x}\ {\rm or}\ \mathcal{D}_\lambda, &&{\rm if}\ W_\lambda(x)=|x|^{2\lambda}e^{-x^2}\ {\rm and}\ I=\mathbb R,
  \\&(1-x^2)^{\frac12}\,\frac {\rm d}{{\rm d}x}\ {\rm or}\ (1-x^2)^{\frac12}\,\mathcal{D}_\lambda, &&{\rm if}\ W_\lambda(x):=|x|^{2\lambda}(1-x^2)^{\mu-\frac 12},\mu>-\frac12\ {\rm and}\ I=[-1,1],\end{aligned}
$$ and $\mathcal{D}_\lambda$ is the univariate Dunkl operator, i.e., $\mathcal{D}_\lambda f(x)=f'(x)+\lambda{(f(x)-f(-x))}/{x}$. Furthermore, the corresponding extremal polynomials are also obtained.
\end{abstract}

\maketitle
\input amssym.def

\section{Introduction}\label{sec1}

Let $\mathcal{P}_n$ denote the set of all algebraic polynomials of degree at most $n$
and $w$ be a weight function on an interval $I$ of $\mathbb R$.
Let $L_q(w)\equiv L_{q,w}(I)$ be the Lebsgue space on $I$ endowed with the finite norm
$$\|f\|_{L_q(w)}:=\left\{\int_I|f(x)|^qw(x){\rm d}x\right\}^{1/q}, 1\le q<\infty.
$$For a finite interval $I$, we replace the space $L_{\infty}(w)$ by $C(I)$ with the uniform norm
$$\|f\|_{C(I)}:=\sup\limits_{x\in I}|f(x)|.$$ For simplicity, we denote $\|\cdot\|_{L_q(I)}$ and $L_q(I)$ if $w(x)=1$. Specifically, $L_{2}(w)$ is a Hilbert space endowed with an inner product
\begin{equation}\label{1.1}
  \langle f,g\rangle_w:=\int_I f(x){ g(x)}w(x){\rm d}x.
\end{equation}

Inequalities of Bernstein-Markov type have been fundamental for
proofs of many inverse theorems in polynomial approximation
theory, cf. \cite{Dz,Iv,Lo,Me}. It provides estimates for the
norms of derivatives of algebraic and trigonometric polynomials.
The classical Markov inequality is for $p\in \mathcal P_n$,
\begin{equation*}
  \|p'\|_{C[-1,1]}\le n^2\|p\|_{C[-1,1]}.
\end{equation*} Here $n^2$ is sharp which can be achieved by the Chebyshev polynomial $T_n$ of the first kind of degree $n$, cf. \cite{M}.
And the Bernstein inequality for the derivative of an algebra
polynomial $p\in\mathcal{P}_n$ is
$$
   \|\sqrt{1-x^2}\,p'\|_{L_q[-1,1]}\le n\|p\|_{L_q[-1,1]},\ 1\le q\le\infty.
$$Here $n$ is sharp  which can be also achieved by the Chebyshev polynomial $T_n$, cf.  \cite{Ber} for $q=\infty$ and \cite{A} for $1\le q<\infty$.

More generally, let ${\rm D}$ be a differential operator on the
interval $I$, and  $w$ be a weight function on $I$. Consider the
following Bernstein-Markov type inequality
\begin{equation}\label{1.3}
\|{\rm D}p\|_{L_q(w)}\le M\|p\|_{L_q(w)},\ 1\le q<\infty,
\end{equation}where the least constant $M>0$ such that \eqref{1.3} holds for all $p\in\mathcal{P}_n$ is often
called Bernstein-Markov factor of the differential operator ${\rm
D}$ for $L_q(w)$. Precisely, the Bernstein-Markov factor of ${\rm
D}$ for $L_q(w)$ is defined by
$$
  M_{n}(L_q(w),{\rm D}):=\sup\limits_{0\neq p\in\mathcal{P}_n}\frac{\|{\rm D}p\|_{L_q(w)}}{\|p\|_{L_q(w)}},\ 1\le q<\infty.
$$
For such extremal problem, we are mainly interested in its exact
value and extremal polynomials. We note that the extremal
polynomials are often given by the orthogonal polynomials with
respect to the inner product \eqref{1.1}. Some beautiful results
were obtained by many researchers, especially for $p=2$, see cf.
\cite{AM,GGM} and the references therein.

\begin{itemize}
  \item  (Hermite case) For the classical Hermite weight $w(x)=e^{-x^2}$ on the real line $\mathbb R$, Schmidt in \cite{S} discovered that
\begin{equation}\label{1.5}
M_{n}\left(L_2(e^{-x^2}),\frac{\rm d}{{\rm d}x}\right)= \sqrt{2n},
\end{equation}with the extremal polynomials $cH_n$, where $H_n$ is the classical Hermite polynomial.  However,
the exact values  for $M_{n}(L_2(|x|^{2\lambda}e^{-x^2}),\frac{\rm
d}{{\rm d}x})$, $\lambda\neq0$ have not been found so far. A
partial answer of this problem given by Draux and Kaliaguine in
\cite{DK} is
 \begin{equation}\label{1.5-0}
\sqrt{2n-\frac{4\lambda}{1+2\lambda}}<M_{n}\left(L_2(|x|^{2\lambda}e^{-x^2}),\frac{\rm d}{{\rm d}x}\right)< \sqrt{2n},\ \lambda>0,\ n=2m+1,
 \end{equation}
 and
$$\sqrt{2(n-1)}<M_{n}\left(L_2(|x|^{2\lambda}e^{-x^2}),\frac{\rm d}{{\rm d}x}\right)<Cn^{\frac{1-\lambda}2},\ -1/2<\lambda<0.$$
  \item (Jacobi case) For the classical Jacobi weight $w(x):=(1-x)^{\alpha}(1+x)^{\beta}$ with $\alpha,\beta>-1$ on the interval $[-1,1]$, A. Guessab and G.V. Milovanovic in \cite{GM} proved
\begin{equation}\label{1.4}
M_{n}\left(L_2((1-x)^{\alpha}(1+x)^{\beta}),(1-x^2)^{\frac12}\frac{\rm d}{{\rm d}x}\right)= \sqrt{n(n+\alpha+\beta+1)},
\end{equation}with the extremal polynomials $cJ_n^{(\alpha,\beta)}$, where $J_n^{(\alpha,\beta)}$ is the classical Jacobi polynomial  and $c$ is an arbitrary nonzero constant.
 \item  (Laguerre case) For the classical Laguerre weight $w(x)=e^{-x}$ on the half real line $\mathbb R_+:=[0,\infty)$, Tur\'an in \cite{T} showed that
$$
M_{n}\left(L_2(e^{-x}),\frac{\rm d}{{\rm d}x}\right)= \left(2\sin\frac{\pi}{4n+2}\right)^{-1},
$$with the extremal polynomials of the form $$c\sum_{j=1}^n\sin\frac{j\pi}{2n+1}L_j(x),$$ where $L_j$ is the classical Laguerre polynomial normalized by $L_j(1)=1$ and $c$ is an arbitrary nonzero constant. While, for more general case of $w(x)=x^\kappa e^{-x},\,\kappa>0$, A. Guessab and G.V. Milovanovic in \cite{GM} showed that
$$
M_{n}\left(L_2(x^\kappa e^{-x}),\sqrt{x}\,\frac{\rm d}{{\rm d}x}\right)= \sqrt{n},
$$with the extremal polynomials $L_n^\kappa$, where $L_n^\kappa$ is the generalized Laguerre polynomial.
\end{itemize}
\vspace{2mm}

This paper is devoted to discussing the Bernstein-Markov factor
$M_n(L_2(w),{\rm D})$.  Firstly,  we discuss the extremal problem
\begin{equation}\label{1.6}
  M_{n}\left(L_2(W_\lambda),\sqrt{A}\,\frac{\rm d}{{\rm d}x}\right)=\sup_{0\neq p\in\mathcal{P}_n}\frac{\|\sqrt{A}\,p'\|_{L_2(W_\lambda)}}{\|p\|_{L_2(W_\lambda)}},\ \lambda>0,
\end{equation}
where  $W_\lambda(x)$ and $A(x)$ are given in Table \ref{t1}. \begin{table}[!htbp]
\centering\label{t1}
\caption{Classification of  generalized weights}
\begin{tabular}{|c|| c|| c |c| c|p{19pc}}
  \hline
  $I$& $W_\lambda$ &$A(x)$& $B(x)$ & $C(x)$\\
  \hline
  $\mathbb R$& $|x|^{2\lambda}e^{-x^2}$ & $1$ & $-2x$  & $-2x$\\
  $[-1,1]$& $|x|^{2\lambda}(1-x^2)^{\mu-1/2}$ & $1-x^2$ & $-(2\mu+1)x$ & $-(2\lambda+2\mu+1)x$\\
  \hline
\end{tabular}
\end{table}There is a close
relationship between the extremal problem \eqref{1.6} and  the integral equation system
\begin{equation}\label{1.7} \int_{I}\left\{A(x)p''+C(x)p'+\frac{2\lambda}xp'+M^2p\right\}q\,W_\lambda=0,\ \   M>0,
\end{equation} where $C(x)$ is given in Table \ref{t1}.
As stated in Lemma \ref{lem1}, the exact value of $M_{n}(L_2(W_\lambda),\sqrt{A}\,\frac{\rm d}{{\rm d}x})$ is  the supremum of all those $M>0$ for which the integral equation system \eqref{1.7}
has a nontrivial solution $p\in\mathcal{P}_n$.

Secondly, we also explore the  same problems for Dunkl operators.
Dunkl operators were originally proposed by C.F. Dunkl in 1989 as
an extension of the classical Fourier transform and has been
widely employed in diverse areas of mathematics and physics (see
\cite{DunXu}). An important application of Dunkl operators lies in
the realm of special functions, where they generate a set of
orthogonal polynomials. Extending the concept of the sharp
Bernstein-Markov inequalities to the Dunkl operators has both
intriguing implications and crucial significance. The Dunkl
operator $\mathcal{D}_\lambda$ associated with the index
$\lambda\in\mathbb R$  is defined by
\begin{equation}\label{1.8}
\mathcal{D}_\lambda f(x)=f'(x)+\lambda\,\frac{f(x)-f(-x)}x
\end{equation}
for a function $f\in C^1(\mathbb R)$. For simplicity, we usually denote $$\sigma\left(f\right)=\frac{f(x)-f(-x)}{x}.$$Obviously, if $\lambda=0$ then $\mathcal{D}_\lambda$ reduces to the usual differential operator ${\rm d}/{{\rm d}x}$.
Its square $\mathcal{D}_\lambda^2$, which is also called Dunkl-Laplace operator, is given by
$$
\mathcal{D}_\lambda^2f=f''+2\lambda\frac{f'}x+\frac{\sigma\left(f\right)}x
$$
for a function $f\in C^2(\mathbb R)$.
In particular, when $\mathcal{D}_\lambda^2$ is restricted to the even subspace $$C_e^2(\mathbb R):=\left\{f\in C^2(\mathbb R):\,f(x)=f(-x)\right\},$$ it reduces to the singular Sturm-Liouville operator
$$
\mathcal{D}_\lambda^2|_{C_e^2(\mathbb R)}f=f''+2\lambda\frac{f'}x.
$$
The extremal problem for the Dunkl operators can be given by
$$
  M_{n}\left(L_2(W_\lambda),\sqrt{A}\,\mathcal{D}_\lambda\right)=\sup_{0\neq p\in\mathcal{P}_n}\frac{\|\sqrt{A}\,\mathcal{D}_\lambda p\|_{L_2(W_\lambda)}}{\|p\|_{L_2(W_\lambda)}},\ \lambda\ge0,
$$
where  $W_\lambda(x)$ and $A(x)$ are given in the Table \ref{t1}.
As stated in Lemma \ref{lem2}, its exact value  is the supremum of all those $M>0$ for which the differential equation
$$
A(x)\mathcal{D}_\lambda^2p+B(x)\mathcal{D}_\lambda p+M^2p=0
$$has a nontrivial solution $p\in\mathcal{P}_n$, and the extremal polynomial can be obtained by the corresponding generalized orthogonal polynomials, where $B(x)$ is given in Table \ref{t1}.

\subsection{New Results}

\

Our first theorem  gives the exact value of Bernstein-Markov factor $M_{n}(L_2(w_\lambda),\frac{\rm d}{{\rm d}x})$, which supplements the previous result \eqref{1.5-0}.
\begin{thm}\label{thm2}
Let $\lambda>0$. Consider the generalized Hermite weight
$w_\lambda(x):=|x|^{2\lambda}e^{-x^2}$ on $\mathbb R$, we have for
all  $n\in\mathbb N$,
\begin{equation*}
 M_{n}\left(L_2(w_{\lambda}),\frac{\rm d}{{\rm d}x}\right)=\Bigg\{\begin{aligned}&\sqrt{2n},\ &&{\rm if}\ n\ {\rm is\ even},\\ &\sqrt{\nu_{\frac{n+1}2}},\ &&{\rm if}\ n\ {\rm is\ odd},
\end{aligned}
\end{equation*}where  $\nu_{\frac{n+1}2}$ is the large positive root of the polynomial
  $$F_{\frac{n+1}2}(t):=\det\left(\left\{(2j+1)(2j+2\lambda)d_{2i+2j}+(t-4j-2)d_{2i+2j+2}\right\}_{i,j=0}^{\frac{n-1}2}\right),$$if $n$ is odd,
   and $d_{2s}=\Gamma(s+\lambda+1/2)$ for $s\in\mathbb N_0$.
\end{thm}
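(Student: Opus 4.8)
The plan is to recast the extremal problem as a finite-dimensional eigenvalue problem and then exploit the parity of $w_\lambda$ to split it into two decoupled pieces whose top eigenvalues I can analyze separately. By \lemref{lem1}, $M_{n}(L_2(w_\lambda),\frac{\rm d}{{\rm d}x})^2$ is the largest $t$ for which $\langle p',q'\rangle_{w_\lambda}=t\,\langle p,q\rangle_{w_\lambda}$ holds for all $q\in\mathcal P_n$ with some $0\neq p\in\mathcal P_n$; equivalently it is the maximum of the Rayleigh quotient $\|p'\|_{w_\lambda}^2/\|p\|_{w_\lambda}^2$ over $\mathcal P_n$. Since $w_\lambda$ is even, both bilinear forms $\langle p,q\rangle_{w_\lambda}$ and $\langle p',q'\rangle_{w_\lambda}$ vanish whenever $p,q$ have opposite parity (the integrand is odd). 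Hence $\mathcal P_n=\mathcal P_n^{\rm even}\oplus\mathcal P_n^{\rm odd}$ is an orthogonal decomposition that block-diagonalizes the problem, and $M_n^2=\max\{\mu_{\rm even},\mu_{\rm odd}\}$, where $\mu_{\rm even}$, $\mu_{\rm odd}$ are the top Rayleigh quotients on the two subspaces. Throughout, for $\alpha>-1$ I write $\|\cdot\|_\alpha$ for the $L_2$ norm with respect to the Laguerre weight $y^\alpha e^{-y}$ on $[0,\infty)$.

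To treat the blocks I would pass to the variable $y=x^2$. On the even subspace, writing $p(x)=r(x^2)$ turns $\|p\|_{w_\lambda}^2$ into $\|r\|_{\lambda-1/2}^2$ and $\|p'\|_{w_\lambda}^2$ into $4\|r'\|_{\lambda+1/2}^2=4\|\sqrt{y}\,r'\|_{\lambda-1/2}^2$, so that $\mu_{\rm even}$ is exactly four times the Laguerre Bernstein–Markov factor (squared) quoted in the introduction, with parameter $\kappa=\lambda-\frac12>-1$. This gives $\mu_{\rm even}=4\lfloor n/2\rfloor$, attained at $r=L_{\lfloor n/2\rfloor}^{\lambda-1/2}$, i.e.\ at the top even generalized Hermite polynomial. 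On the odd subspace I write $p(x)=x\,q(x^2)$ with $\deg q\le M-1$, $M=\lfloor(n+1)/2\rfloor$, and the same substitution yields $\mu_{\rm odd}=\sup_{q\in\mathcal P_{M-1}}\|q+2yq'\|_{\lambda-1/2}^2/\|q\|_{\lambda+1/2}^2$. Setting $Lq:=q+2yq'$ and writing the associated generalized eigenvalue problem $\langle Lq,Lr\rangle_{\lambda-1/2}=t\,\langle yq,r\rangle_{\lambda-1/2}$ in the monomial basis $q=\sum_jc_jy^j$, the matrix entries become the moments $d_{2(i+j)}=\Gamma(i+j+\lambda+\frac12)$ and $d_{2(i+j)+2}$. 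I would then verify, using $d_{2k+2}=(k+\lambda+\frac12)d_{2k}$ together with the Euler–Lagrange operator $L^{*}L$ (testing $L^{*}Lq=t\,yq$ against $y^{i}$), that the characteristic determinant equals $(-1)^{M}F_M(t)$; hence $\mu_{\rm odd}=\nu_M$, the largest root of $F_{(n+1)/2}$.

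It then remains to decide which block wins. One integration by parts gives $\|q+2yq'\|_{\lambda-1/2}^2=-2\lambda\|q\|_{\lambda-1/2}^2+2\|q\|_{\lambda+1/2}^2+4\|q'\|_{\lambda+3/2}^2$, so $\mu_{\rm odd}=2+\sup_q\bigl(4\|q'\|_{\lambda+3/2}^2-2\lambda\|q\|_{\lambda-1/2}^2\bigr)/\|q\|_{\lambda+1/2}^2$. Dropping the nonpositive term and applying the same Laguerre inequality once more (parameter $\lambda+\frac12$, degree $M-1$) yields $\mu_{\rm odd}\le 4M-2$; since $\mu_{\rm even}=4M=2n$ for $n=2M$ even, the even block wins and $M_n^2=2n$. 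For odd $n=2M-1$ one has $\mu_{\rm even}=4M-4$, so I must show $\nu_M>4M-4$. I would do this with the explicit test polynomial $q=L_{M-1}^{\lambda-1/2}$, for which the recurrence $y(L_{M-1}^{\lambda-1/2})'=(M-1)L_{M-1}^{\lambda-1/2}-(M+\lambda-\tfrac32)L_{M-2}^{\lambda-1/2}$ gives $Lq=(2M-1)L_{M-1}^{\lambda-1/2}-(2M+2\lambda-3)L_{M-2}^{\lambda-1/2}$; evaluating the three norms through the Laguerre norms and three-term recurrence, the quotient comes out to exactly $4M-4+(2M+\lambda-\tfrac32)^{-1}>4M-4$. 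Hence the odd block wins and $M_n^2=\nu_{(n+1)/2}$.

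The main obstacle is this comparison step, in two respects. First, matching the abstract top eigenvalue of the odd block to the specific determinant $F_M$ requires a careful Hankel-type manipulation of the moments $d_{2k}$ (and confirming that $F_M$ is a nonzero multiple of the characteristic determinant, so that its largest root is $\mu_{\rm odd}$). Second, producing a test function sharp enough to guarantee $\nu_M>4M-4$ uniformly in $\lambda>0$ is the genuinely delicate point, since a naive monomial choice degrades as $\lambda$ grows; the Laguerre polynomial $L_{M-1}^{\lambda-1/2}$ is what makes the margin collapse to the clean positive quantity $(2M+\lambda-\frac32)^{-1}$. By contrast, the even block and the upper bound $\mu_{\rm odd}\le 4M-2$ follow almost immediately from the known Laguerre Bernstein–Markov identity.
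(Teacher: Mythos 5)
Your proposal is correct, and it reaches the theorem by a route that is genuinely different from the paper's. The paper does not block-diagonalize a priori: it first imports the Draux--Kaliaguine bounds \eqref{1.5-0} to force the strict inequality $M_n>M_{n-1}$, deduces from this that the extremal polynomial must be purely even (resp.\ odd), and then identifies the even-case value through Proposition \ref{prop4-0} (the ODE characterization of $H_n^\lambda$) and the odd-case value through the determinant $F_{(n+1)/2}$ exactly as you do. Your parity splitting of the two quadratic forms makes the decoupling exact from the start, so you never need the external bounds \eqref{1.5-0}; instead you must, and do, compare the two block maxima yourself. The substitution $y=x^2$ reducing both blocks to Laguerre-type Bernstein--Markov problems is not in the paper at all, and it buys you two things: the even block becomes the known identity $M_N(L_2(y^\kappa e^{-y}),\sqrt{y}\,\tfrac{\rm d}{{\rm d}y})=\sqrt{N}$ with $\kappa=\lambda-\tfrac12$ (note the paper quotes this only for $\kappa>0$, so for $0<\lambda\le\tfrac12$ you should invoke the version valid for all $\kappa>-1$, which the Laguerre ODE argument does give), and the comparison step becomes quantitative: your integration-by-parts bound $\mu_{\rm odd}\le 4M-2<4M=\mu_{\rm even}$ settles even $n$, and the test polynomial $L_{M-1}^{\lambda-1/2}$ gives the exact quotient $4M-4+(2M+\lambda-\tfrac32)^{-1}>\mu_{\rm even}$ for odd $n$ (I checked this computation; it is right, and it is consistent with the paper's $n=1$ value $2/(2\lambda+1)$). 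The identification of $\mu_{\rm odd}$ with the largest root of $F_{(n+1)/2}$ is less delicate than you fear: by the paper's identity \eqref{3.6}, the matrix $\bigl(\langle x^{2i+1},L[x^{2j+1}]\rangle_{w_\lambda}\bigr)$ is exactly $tB-A$ with $A_{ij}=\langle (x^{2i+1})',(x^{2j+1})'\rangle_{w_\lambda}$ and $B_{ij}=\langle x^{2i+1},x^{2j+1}\rangle_{w_\lambda}$, so $F_{(n+1)/2}(t)=\det(tB-A)$ and its roots are precisely the generalized eigenvalues of the positive-definite pencil, the largest being $\mu_{\rm odd}$; no Hankel manipulation is needed. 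The net effect is a self-contained proof (modulo the standard Laguerre identity) that also produces the explicit lower bound $\nu_{(n+1)/2}\ge 2n-2+(n+\lambda-\tfrac12)^{-1}$, which the paper's argument does not give.
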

\begin{rem} When $n$ is odd, it can be seen from the proof of Theorem \ref{thm2} that the polynomial $F_{\frac{n+1}2}$ has at least one positive root.
\end{rem}
\begin{rem} (Extremal polynomials)
 \begin{itemize}
   \item [(i)] For all $n$ being even, the extremal polynomials are given by $cH_n^{\lambda}$, where $H_n^{\lambda}$ is the generalized Hermite polynomial with respect to
the weight $w_\lambda$, and $c$ is an
arbitrary nonzero constant;
   \item [(ii)] for all $n$ being odd,  the extremal polynomials $p=\sum\limits_{j=0}^{(n-1)/2}a_{2j+1}x^{2j+1}$ are odd and
    can be determined by  the linear equation system
$$ \sum_{j=0}^{{(n-1)}/2}\left\{(2j+1)(2j+2\lambda)d_{2i+2j}+(\nu_{\frac{n+1}2}-4j-2)d_{2i+2j+2}\right\}a_{2j+1}=0,\
i=0,\dots,{\frac{n-1}2}.$$
 \end{itemize}
\end{rem}
\vskip3mm
Our second theorem determines the exact value of Bernstein-Markov factor $M_{n}(L_2(w_{\lambda,\mu}),(1-x^2)^{\frac12}\frac{\rm d}{{\rm d}x})$, which generalizes \eqref{1.4} with $\alpha=\beta=\mu-1/2,\,\mu>-1/2$.

\begin{thm}\label{thm1}
Let $\lambda>0$ and $\mu>-1/2$. Consider the generalized
Gegenbauer weight
$w_{\lambda,\mu}(x)=|x|^{2\lambda}(1-x^2)^{\mu-\frac12}$ on
$[-1,1]$, we have, for all $n\in\mathbb N$,
$$
 M_{n}\left(L_2(w_{\lambda,\mu}),(1-x^2)^{\frac12}\frac{\rm d}{{\rm d}x}\right)
=\Bigg\{\begin{aligned}&\max\left\{\sqrt{\nu_{\frac{n}2}},\sqrt{n(n+2\lambda+2\mu)}\right\},\ &&{\rm if}\ n\ {\rm is\ even},\\ &\max\left\{\sqrt{\nu_{\frac{n+1}2}},\sqrt{(n-1)(n+2\lambda+2\mu-1)}\right\},\ &&{\rm if}\ n\ {\rm is\ odd},
\end{aligned}
$$where  $m=(n-2)/2$ if $n$ is even and $m=(n-1)/2$ if
$n$ is odd, $\nu_{m+1}$ is the large positive root of the
polynomial
$$G_{m+1}(t):=\det\left(\{(2j+1)(2j+2\lambda)c_{2i+2j}+[t-(2j+1)(2j+2\lambda+2\mu+1)]c_{2i+2j+2}\}_{i,j=0}^{m}\right),$$ (set $\nu_{m+1}=0$ if  the polynomial
$G_{m+1}$ does not have any positive roots), and
$$c_{2s}=\frac{\Gamma(s+\lambda+1/2)\Gamma(\mu+1/2)}{\Gamma(\lz+\mu+s+1)},\ \ s\in\mathbb N_0.$$
\end{thm}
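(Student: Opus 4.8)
The plan is to combine the variational characterization of Lemma~\ref{lem1} with a parity decomposition, after which the even part becomes a classical Gegenbauer eigenvalue computation while the odd part collapses to the determinantal condition $G_{m+1}(M^2)=0$. Writing $\mathcal{L}p:=(1-x^2)p''-(2\lambda+2\mu+1)xp'+\tfrac{2\lambda}{x}p'$ for the operator appearing in \eqref{1.7}, I would first record the energy identity: integration by parts (the boundary terms vanish at $\pm1$ because of the factor $1-x^2$, and at $0$ because $\lambda>0$) gives $\langle-\mathcal{L}p,q\rangle_{w_{\lambda,\mu}}=\int_{-1}^{1}(1-x^2)p'(x)q'(x)\,w_{\lambda,\mu}(x)\,dx$ for all $p,q\in\mathcal{P}_n$. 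Thus Lemma~\ref{lem1} identifies $M_n^2$ with the maximal Rayleigh quotient $\int_{-1}^1(1-x^2)(p')^2w_{\lambda,\mu}\big/\int_{-1}^1 p^2 w_{\lambda,\mu}$ over $0\ne p\in\mathcal{P}_n$. Since $w_{\lambda,\mu}$ is even and $\mathcal{L}$ preserves parity, even and odd polynomials are orthogonal both for $\langle\cdot,\cdot\rangle_{w_{\lambda,\mu}}$ and for the energy form; writing $p=p_e+p_o$ kills the cross terms in numerator and denominator, so the quotient is a weighted mean of the even and odd quotients and $M_n^2=\max\{\mathcal{M}_e,\mathcal{M}_o\}$, the suprema over even and over odd $p\in\mathcal{P}_n$ respectively.

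For the even part, $p'(0)=0$, so $\mathcal{L}$ genuinely maps even polynomials of degree $\le n$ into themselves, and $\mathcal{L}x^{2j}=2j(2j-1+2\lambda)x^{2j-2}-2j(2j+2\lambda+2\mu)x^{2j}$ is triangular in the basis $\{x^{2j}\}$. Being self-adjoint (by the energy identity) and triangular, $-\mathcal{L}$ has eigenvalues exactly $2j(2j+2\lambda+2\mu)$ for the even degrees $2j\le n$, with the generalized Gegenbauer polynomials orthogonal to $w_{\lambda,\mu}$ as eigenfunctions; hence $\mathcal{M}_e$ is the largest of these, namely $n(n+2\lambda+2\mu)$ for even $n$ and $(n-1)(n+2\lambda+2\mu-1)$ for odd $n$, attained by the Gegenbauer polynomial of that degree. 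This already produces the closed-form entry in each branch of the theorem.

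The odd part is where the work lies, since for odd $p$ the term $\tfrac{2\lambda}{x}p'$ carries an $x^{-1}$ contribution and $\mathcal{L}$ no longer preserves $\mathcal{P}_n$. I would instead substitute $p=\sum_{j=0}^{m}a_{2j+1}x^{2j+1}$ directly into \eqref{1.7}, test against $q=x^{2i+1}$ for $i=0,\dots,m$, and evaluate all moments by $\int_{-1}^1 x^{2s}w_{\lambda,\mu}\,dx=c_{2s}$ via the substitution $u=x^2$ and the Beta integral. Using $\mathcal{L}x^{2j+1}=(2j+1)(2j+2\lambda)x^{2j-1}-(2j+1)(2j+2\lambda+2\mu+1)x^{2j+1}$, the homogeneous system for $(a_{2j+1})$ acquires the coefficient matrix $\big[(2j+1)(2j+2\lambda)c_{2i+2j}+(M^2-(2j+1)(2j+2\lambda+2\mu+1))c_{2i+2j+2}\big]_{i,j=0}^m$, which is precisely the matrix defining $G_{m+1}(M^2)$, so a nontrivial odd solution exists iff $G_{m+1}(M^2)=0$. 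The remaining point is that the largest such root is the supremum: using the relation $c_{2s+2}/c_{2s}=(s+\lambda+\tfrac12)/(s+\lambda+\mu+1)$ one checks that this matrix equals, up to sign, $S-M^2G$, where $S_{ij}=\int_{-1}^1(1-x^2)\big(x^{2i+1}\big)'\big(x^{2j+1}\big)'w_{\lambda,\mu}$ is symmetric and $G_{ij}=c_{2i+2j+2}$ is the positive definite Gram matrix. Hence the roots of $G_{m+1}$ are exactly the real, nonnegative generalized eigenvalues of $(S,G)$, and $\mathcal{M}_o$ is the largest of them, $\nu_{m+1}$ with $m+1=n/2$ for even $n$ and $m+1=(n+1)/2$ for odd $n$; the extremal odd polynomial is the associated eigenvector. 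Taking $\max\{\mathcal{M}_e,\mathcal{M}_o\}$ then yields the stated formula.

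I expect the main obstacle to be the last step: verifying that the asymmetric-looking determinant $G_{m+1}$ really coincides (up to sign) with the symmetric pencil $S-M^2G$, so that its roots are genuinely real and the largest one is the maximal Rayleigh quotient rather than a spurious value, together with the careful justification of integrating by parts through the singularity at $x=0$ and of restricting the test functions in \eqref{1.7} to a single parity class. The even branch, by contrast, is essentially the standard Sturm--Liouville eigenvalue computation for the generalized Gegenbauer polynomials.
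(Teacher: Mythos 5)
Your proposal is correct and follows the same overall skeleton as the paper's proof: both pass through Lemma~\ref{lem1}, split by parity, identify the even part with the generalized Gegenbauer eigenvalues $s(s+2\lambda+2\mu)$ (the paper via Proposition~\ref{prop3-0}, you via triangularity of the operator on the even subspace --- equivalent computations), and reduce the odd part to the vanishing of the determinant $G_{m+1}$. The one place where you genuinely go beyond the paper is the final step of the odd part. The paper derives $G_{m+1}(M_n^2)=0$ from Cramer's rule and then simply declares $M_n=\sqrt{\nu_{m+1}}$ with $\nu_{m+1}$ ``the large positive root''; you instead identify the coefficient matrix with the symmetric definite pencil $M^2G-S$, where $S_{ij}=\int_{-1}^1(1-x^2)(x^{2i+1})'(x^{2j+1})'w_{\lambda,\mu}\,{\rm d}x$ and $G_{ij}=c_{2i+2j+2}$. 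This identification is exactly the energy identity \eqref{3.4} applied to odd monomials (one can check directly, e.g.\ $(2j+1)(2j+2\lambda+2\mu+1)c_{2i+2j+2}-(2j+1)(2j+2\lambda)c_{2i+2j}=(2i+1)(2j+1)(c_{2i+2j}-c_{2i+2j+2})$ using $c_{2s+2}/c_{2s}=(s+\lambda+\tfrac12)/(s+\lambda+\mu+1)$), and it is worth carrying out: it shows that all $m+1$ roots of $G_{m+1}$ are real and positive and that the largest one is precisely the supremum of the Rayleigh quotient over odd polynomials --- the step the paper leaves implicit. It also shows that the parenthetical clause ``set $\nu_{m+1}=0$ if $G_{m+1}$ has no positive roots'' is vacuous, since $S$ and $G$ are positive definite Gram matrices. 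Your other deviation --- splitting the Rayleigh quotient itself by parity via the mediant inequality, rather than decomposing the extremal polynomial and testing against $L_n[\,p_e^*\,]$ as the paper does --- is a harmless, slightly cleaner packaging of the same idea. No gaps.
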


\begin{rem} (Extremal polynomials)
 \begin{itemize}
   \item [(i)] For all $n$ being even, if $M_{n}(L_2(w_{\lambda,\mu}),(1-x^2)^{\frac12}\frac{\rm d}{{\rm d}x})=\sqrt{n(n+2\lambda+2\mu)}$,
    then the extremal polynomials  are given by $cC_n^{(\mu,\lambda)}$, where $C_n^{(\mu,\lambda)}$ is the generalized Gegenbauer polynomial with respect to
the weight $w_{\lambda,\mu}$, and $c$ is an arbitrary nonzero
constant; if
$M_{n}(L_2(w_{\lambda,\mu}),(1-x^2)^{\frac12}\frac{\rm d}{{\rm
d}x})=\sqrt{\nu_{\frac{n}2}}$, then the extremal polynomials
$p=\sum_{j=0}^{(n-2)/2}a_{2j+1}x^{2j+1}$ are odd and can be
determined by the linear equation system
$$ \sum_{j=0}^{\frac{n-2}2}\left\{(2j+1)(2j+2\lambda)c_{2i+2j}+[\nu_{\frac{n}2}-(2j+1)(2j+2\lambda+2\mu+1)]c_{2i+2j+2}\right\}a_{2j+1}=0,\ i=0,\dots,{\frac{n-2}2}.$$
   \item [(ii)] For all $n$ being odd, if $M_{n}(L_2(w_{\lambda,\mu}),(1-x^2)^{\frac12}\frac{\rm d}{{\rm d}x})=\sqrt{(n-1)(n+2\lambda+2\mu-1)}$, then the extremal polynomials  are given by $cC_{n-1}^{(\mu,\lambda)}$; if
$M_{n}(L_2(w_{\lambda,\mu}),(1-x^2)^{\frac12}\frac{\rm d}{{\rm
d}x})=\sqrt{\nu_{\frac{n+1}2}}$, then the extremal polynomials
$p=\sum_{j=0}^{(n-1)/2}a_{2j+1}x^{2j+1}$ are odd and can be
determined by the linear equation system
$$ \sum_{j=0}^{\frac{n-1}2}\left\{(2j+1)(2j+2\lambda)c_{2i+2j}+[\nu_{\frac{n+1}2}-(2j+1)(2j+2\lambda+2\mu+1)]c_{2i+2j+2}\right\}a_{2j+1}=0,\ i=0,\dots,{\frac{n-1}2}.$$
 \end{itemize}
\end{rem}


\vskip3mm

Our third and fourth theorems are completely new, which, in the
case $\lambda=0$, reduces to the cases of  \eqref{1.5} and
\eqref{1.4} with $\alpha=\beta=\mu-1/2$, respectively.
\begin{thm}\label{thm4}
Let $\lambda\ge0$. Then  for all $n\in\mathbb N$,
\begin{equation*}
  M_{n}(L_2(w_\lambda),\mathcal{D}_\lambda)=\left\{\begin{aligned}&\sqrt{2n}, &&{\rm if}\ n\ {\rm is\ even}\ {\rm and}\ \lambda\le1/2,
  \\&\sqrt{2(n+2\lambda-1)}, &&{\rm if}\ n\ {\rm is\ even}\ {\rm and}\ \lambda>1/2,
   \\&\sqrt{2(n+2\lambda)}, &&{\rm if}\ n\ {\rm is\ odd},\end{aligned}\right.
\end{equation*}
with the extremal polynomials  $cH_n^\lambda$, $cH_{n-1}^\lambda$, and $cH_n^\lambda$, respectively, where $c$ is an arbitrary nonzero constant.
\end{thm}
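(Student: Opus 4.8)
The plan is to apply Lemma \ref{lem2} and thereby reduce the computation of $M_n(L_2(w_\lambda),\mathcal{D}_\lambda)$ to an eigenvalue problem on the finite-dimensional space $\mathcal{P}_n$. Reading off the Hermite row of Table \ref{t1}, we have $A(x)=1$ and $B(x)=-2x$, so Lemma \ref{lem2} identifies $M_n(L_2(w_\lambda),\mathcal{D}_\lambda)$ with the supremum of those $M>0$ for which
$$\mathcal{D}_\lambda^2 p - 2x\,\mathcal{D}_\lambda p + M^2 p = 0$$
has a nontrivial solution $p\in\mathcal{P}_n$. Writing $\mathcal{L}:=\mathcal{D}_\lambda^2-2x\,\mathcal{D}_\lambda$, this says precisely that $M^2$ is an eigenvalue of $-\mathcal{L}$ acting on $\mathcal{P}_n$, and that the extremal polynomial is the associated eigenpolynomial, which by Lemma \ref{lem2} is a generalized orthogonal polynomial.

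First I would record the action of the Dunkl operator on monomials directly from \eqref{1.8}: $\mathcal{D}_\lambda x^k = k x^{k-1}$ when $k$ is even and $\mathcal{D}_\lambda x^k = (k+2\lambda)x^{k-1}$ when $k$ is odd. Iterating gives $\mathcal{D}_\lambda^2 x^k$ as a multiple of $x^{k-2}$, and a short computation yields
$$-\mathcal{L}x^k = \mu_k\, x^k - c_k\, x^{k-2},$$
where $c_k$ is an explicit constant and, collecting the leading coefficient, $\mu_k = 2k$ for $k$ even and $\mu_k = 2(k+2\lambda)$ for $k$ odd. The key structural point is that $-\mathcal{L}$ maps each $\mathcal{P}_k$ into itself and acts as multiplication by $\mu_k$ on the top-degree quotient, so its matrix in the monomial basis ordered by degree is triangular with diagonal entries $\mu_0,\dots,\mu_n$. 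Hence the eigenvalues of $-\mathcal{L}|_{\mathcal{P}_n}$ are exactly $\{\mu_k\}_{k=0}^{n}$, the admissible values of $M^2$ form this finite set, and therefore $M_n(L_2(w_\lambda),\mathcal{D}_\lambda)^2 = \max_{0\le k\le n}\mu_k$.

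It then remains to maximize $\mu_k$ over $0\le k\le n$ and to pin down the extremal polynomial. Both subsequences $\mu_k$ (over even $k$ and over odd $k$) are increasing in $k$, so the maximum over even indices is attained at the largest even $k\le n$ and over odd indices at the largest odd $k\le n$. For $n$ odd, comparing $\mu_n=2(n+2\lambda)$ with $\mu_{n-1}=2(n-1)$ shows $\mu_n$ always dominates since $\lambda\ge 0$, giving $\sqrt{2(n+2\lambda)}$. For $n$ even, I compare $\mu_n=2n$ (largest even index) with $\mu_{n-1}=2(n+2\lambda-1)$ (largest odd index); their difference equals $2(1-2\lambda)$, so $\mu_n\ge\mu_{n-1}$ precisely when $\lambda\le 1/2$, which produces the two even-$n$ cases $\sqrt{2n}$ and $\sqrt{2(n+2\lambda-1)}$. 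Finally, the generalized Hermite polynomial $H_k^\lambda$, being orthogonal with respect to $w_\lambda$ and of degree $k$, satisfies $\mathcal{L}H_k^\lambda=-\mu_k H_k^\lambda$ (one checks, using self-adjointness of $\mathcal{L}$ with respect to $\langle\cdot,\cdot\rangle_{w_\lambda}$, that $\mathcal{L}H_k^\lambda\perp\mathcal{P}_{k-1}$, so it is a multiple of $H_k^\lambda$, the multiple being $-\mu_k$ by leading coefficients). This identifies the extremal polynomials as $cH_n^\lambda$, $cH_{n-1}^\lambda$, and $cH_n^\lambda$ in the three respective cases.

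The step requiring the most care is the structural claim that the admissible set of $M^2$ is exactly $\{\mu_k\}_{k=0}^{n}$, with no extra eigenvalues arising. This rests on verifying that $-\mathcal{L}$ genuinely preserves $\mathcal{P}_n$ and lowers degree by $0$ or $2$, so that the triangular-matrix argument forces the eigenvalues to be the diagonal entries $\mu_k$. Any coincidences among the $\mu_k$ (for instance when $2\cdot(2j) = 2(2l+1+2\lambda)$) affect neither the set of eigenvalues nor its maximum, and so may be safely disregarded; likewise the identification of $H_k^\lambda$ as the extremal polynomial only requires exhibiting it as one eigenfunction for the maximal eigenvalue, not a uniqueness statement.
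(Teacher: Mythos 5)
Your proposal is correct and follows essentially the same route as the paper: both reduce via Lemma \ref{lem2} to the differential--difference equation $\mathcal{D}_\lambda^2p-2x\,\mathcal{D}_\lambda p+M^2p=0$ on $\mathcal{P}_n$, identify the admissible values of $M^2$ as $\lambda_s^2=2\{s+\lambda[1-(-1)^s]\}$ for $s\le n$, and maximize over the largest even and odd indices, which yields exactly the three cases. The only difference is cosmetic: where the paper simply invokes Proposition \ref{prop2} (itself proved by the same coefficient recurrence you compute), you re-derive the spectrum in-line by a triangularity argument in the monomial basis and recover $H_k^\lambda$ as the eigenfunctions via self-adjointness.
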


\begin{thm}\label{thm3}
Let $\lambda\ge0$ and $\mu>-1/2$. Then
\begin{itemize}
  \item [(i)] for all odd $n\in\mathbb N$,
\begin{equation*}
  M_{n}\left(L_2(w_{\lambda,\mu}),(1-x^2)^{\frac12}\mathcal{D}_\lambda\right)=\sqrt{n(n+2\lambda+2\mu)+4\lambda\mu},
\end{equation*}with the extremal polynomials $cC_n^{(\mu,\lambda)}$, where $c$ is an arbitrary nonzero constant;
  \item[(ii)] for $(2\lambda-1)(2\mu-1)\le4$ and for all even $n\in\mathbb
N$,
\begin{equation*}
  M_{n}\left(L_2(w_{\lambda,\mu}),(1-x^2)^{\frac12}\mathcal{D}_\lambda\right)=\sqrt{n(n+2\lambda+2\mu)},
\end{equation*}with the extremal polynomials $cC_n^{(\mu,\lambda)}$, where $c$ is an arbitrary nonzero constant;
 \item[(iii)] for $(2\lambda-1)(2\mu-1)>4$ and for all even $n\in\mathbb
N$,
$$M_{n}\left(L_2(w_{\lambda,\mu}),(1-x^2)^{\frac12}\mathcal{D}_\lambda\right)=\left\{\begin{aligned}&\sqrt{n(n+2\lambda+2\mu)}, &&{\rm if}\  n\ge n_0,
  \\& \sqrt{n(n+2\lambda+2\mu)+2(n_0-n)}, &&{\rm if}\ n<n_0,\end{aligned}\right.$$
with the extremal polynomials $cC_n^{(\mu,\lambda)}$ and $cC_{n-1}^{(\mu,\lambda)}$, respectively, where $n_0:=(\lambda-1/2)(2\mu-1)$ and $c$ is an arbitrary nonzero constant.
\end{itemize}
\end{thm}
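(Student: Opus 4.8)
The plan is to apply \lemref{lem2} to reduce the extremal problem to an eigenvalue problem, and then to read off the answer from the spectrum of the relevant operator on $\mathcal{P}_n$. Taking $A(x)=1-x^2$ and $B(x)=-(2\mu+1)x$ from the second row of Table~\ref{t1}, \lemref{lem2} identifies $M_n(L_2(w_{\lambda,\mu}),(1-x^2)^{1/2}\mathcal{D}_\lambda)$ with the supremum of those $M>0$ for which
\[
Lp+M^2p=0,\qquad L:=(1-x^2)\mathcal{D}_\lambda^2-(2\mu+1)x\,\mathcal{D}_\lambda,
\]
has a nontrivial solution $p\in\mathcal{P}_n$. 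So the task becomes understanding the action of $L$ on $\mathcal{P}_n$.

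First I would check that $L$ maps $\mathcal{P}_n$ into itself and commutes with the parity involution $x\mapsto-x$; this is immediate since $\mathcal{D}_\lambda$ interchanges even and odd polynomials, $\mathcal{D}_\lambda^2$ preserves parity, and $1-x^2$, $x$ have definite parity. The crucial structural fact is that $L$ is symmetric with respect to $\langle\cdot,\cdot\rangle_{w_{\lambda,\mu}}$. To prove this I would use the skew-symmetry of $\mathcal{D}_\lambda$ against the measure $|x|^{2\lambda}\,dx$: writing $(1-x^2)q\,w_{\lambda,\mu}=\big(q(1-x^2)^{\mu+1/2}\big)|x|^{2\lambda}$ and integrating by parts once, then using the Leibniz rule $\mathcal{D}_\lambda(fg)=(\mathcal{D}_\lambda f)g+f\,\mathcal{D}_\lambda g$ valid for the \emph{even} factor $f=(1-x^2)^{\mu+1/2}$, the cross terms coming from the $-(2\mu+1)x\,\mathcal{D}_\lambda$ part cancel exactly and one is left with
\[
\langle Lp,q\rangle_{w_{\lambda,\mu}}=-\int_{-1}^{1}(\mathcal{D}_\lambda p)(\mathcal{D}_\lambda q)\,(1-x^2)^{\mu+\frac12}|x|^{2\lambda}\,dx,
\]
which is symmetric (and negative semidefinite). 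Since $L$ preserves degrees and $\langle LC_k^{(\mu,\lambda)},q\rangle_{w_{\lambda,\mu}}=\langle C_k^{(\mu,\lambda)},Lq\rangle_{w_{\lambda,\mu}}=0$ for all $q\in\mathcal{P}_{k-1}$, the generalized Gegenbauer polynomial $C_k^{(\mu,\lambda)}$ is an eigenfunction, $LC_k^{(\mu,\lambda)}=-\gamma_k C_k^{(\mu,\lambda)}$. I would then compute $\gamma_k$ as minus the coefficient of $x^k$ in $L(x^k)$; expanding $\mathcal{D}_\lambda$ and $\mathcal{D}_\lambda^2$ on a single monomial gives
\[
\gamma_k=\begin{cases} k(k+2\lambda+2\mu), & k\ \text{even},\\ (k+2\lambda)(k+2\mu)=k(k+2\lambda+2\mu)+4\lambda\mu, & k\ \text{odd}.\end{cases}
\]

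Because $L$ is triangular with respect to the degree filtration inside each parity class, with diagonal entries $-\gamma_0,\dots,-\gamma_n$, the equation $Lp+M^2p=0$ has a nontrivial solution in $\mathcal{P}_n$ exactly when $M^2\in\{\gamma_0,\dots,\gamma_n\}$; hence $M_n^2=\max_{0\le k\le n}\gamma_k$, attained by the corresponding $C_k^{(\mu,\lambda)}$. Both subsequences $(\gamma_k)$ over even and over odd indices are strictly increasing, so the maximum occurs at $k=n$ or $k=n-1$. For even $n$, writing $E=\gamma_n$ and $O=\gamma_{n-1}$, a short computation yields $O-E=(2\lambda-1)(2\mu-1)-2n=2(n_0-n)$ with $n_0=(\lambda-\tfrac12)(2\mu-1)$. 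For odd $n$ the top (odd) index always dominates, which gives (i). For even $n$ the sign of $O-E$ produces the split: if $(2\lambda-1)(2\mu-1)\le4$ then $n\ge2\ge n_0$ forces $O\le E$ for every even $n$, giving (ii); if $(2\lambda-1)(2\mu-1)>4$ then $E$ wins for $n\ge n_0$ and $O=n(n+2\lambda+2\mu)+2(n_0-n)$ wins for $n<n_0$, with extremal polynomials $C_n^{(\mu,\lambda)}$ and $C_{n-1}^{(\mu,\lambda)}$ respectively, giving (iii).

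The main obstacle is the second paragraph. Two points need care. First, the integration-by-parts identity rests on the skew-symmetry of $\mathcal{D}_\lambda$ against $|x|^{2\lambda}\,dx$, which must be justified \emph{across} the singularity at $x=0$ created by $|x|^{2\lambda}$, and the boundary terms at $x=\pm1$ must be shown to vanish (they do, since $(1-x^2)^{\mu+1/2}$ kills the endpoints for $\mu>-1/2$ and the nonlocal part of $\mathcal{D}_\lambda$ contributes no boundary term). Second, the odd-degree leading-coefficient computation must be done exactly, because the extra term $4\lambda\mu$ distinguishing odd from even indices is precisely what generates the threshold $(2\lambda-1)(2\mu-1)$ versus $4$ and hence the entire three-way case distinction.
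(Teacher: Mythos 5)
Your proposal is correct and follows the same skeleton as the paper's proof: reduce via Lemma \ref{lem2} to the differential--difference equation $(1-x^2)\mathcal{D}_\lambda^2p-(2\mu+1)x\mathcal{D}_\lambda p+M^2p=0$, identify the admissible values of $M^2$ as the numbers $\lambda_s^2$ of \eqref{2.5}, and then compare $\lambda_n^2$ with $\lambda_{n-1}^2$, which yields exactly the difference $2n-(2\lambda-1)(2\mu-1)=2(n-n_0)$ and hence the three cases. The one place you genuinely diverge is the classification step: the paper invokes Proposition \ref{prop1}, which expands $p$ in monomials and derives a two-term coefficient recursion forcing $p=cC_s^{(\mu,\lambda)}$ and $M=\lambda_s$, whereas you argue spectrally --- symmetry and negative semidefiniteness of $L$ with respect to $\langle\cdot,\cdot\rangle_{w_{\lambda,\mu}}$, so that the orthogonal basis $C_k^{(\mu,\lambda)}$ diagonalizes $L$ with eigenvalues read off from leading coefficients. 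Both routes are sound, and the quadratic-form identity you flag as the main obstacle is precisely \eqref{3.17}, which the paper establishes (with the boundary terms and the behaviour across $x=0$ handled by \eqref{2.2}--\eqref{2.3}) inside the proof of Lemma \ref{lem2}; your version buys a cleaner conceptual picture of why the $\gamma_k$ exhaust the spectrum, while the paper's recursion additionally pins down uniqueness of the eigenfunctions without appealing to nondegeneracy. One small caveat common to both arguments: when $n$ is even and $n=n_0$ exactly, $\gamma_n=\gamma_{n-1}$, the top eigenspace is two-dimensional, and every nonzero combination $aC_n^{(\mu,\lambda)}+bC_{n-1}^{(\mu,\lambda)}$ is extremal, so the assertion that the extremal polynomials are exactly $cC_n^{(\mu,\lambda)}$ requires the strict inequality $n>n_0$.
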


\vskip3mm

As an additional result, we also give a characterization of the generalized Gegenbauer and Hermite polynomials.

\begin{thm}\label{thm5} Let $\lambda\ge0$ and $\mu>-1/2$.
\begin{itemize}
\item [(i)] For all $p\in\mathcal{P}_n$, the inequality
\begin{align*}
  &\quad(2\lambda_n^2-2\mu-1)\|\sqrt{1-x^2}\,\mathcal{D}_\lambda p\|_{L_2(w_{\lambda,\mu})}^2+2\lambda^3(2\mu+1)\|\sqrt{1-x^2}\,\sigma(p)\|_{L_2(w_{\lambda,\mu})}^2
  \\&\le 2\lambda(2\mu+1)\langle (1-x^2)p',p'(-\cdot)\rangle_{w_{\lambda,\mu}}+\lambda_n^4\|p\|_{L_2(w_{\lambda,\mu})}^2+\|(1-x^2)\mathcal{D}_\lambda^2p\|_{L_2(w_{\lambda,\mu})}^2,
\end{align*}
holds,  with equality if and only if $p=cC_n^{(\mu,\lambda)}$, where $$\lambda_n^2=\left\{n(n+2\lambda+2\mu)+2\lambda\mu[1-(-1)^n]\right\}.$$

\item [(ii)] For all $p\in\mathcal{P}_n$, the inequality
$$
  (2\lambda_n^2-2)\|\mathcal{D}_\lambda p\|_{L_2(w_\lambda)}^2+4\lambda^3\|\sigma(p)\|_{L_2(w_\lambda)}^2
  \le 4\lambda\langle p',p'(-\cdot)\rangle_{w_\lambda}+\lambda_n^4\|p\|_{L_2(w_\lambda)}^2+\|\mathcal{D}_\lambda^2p\|_{L_2(w_\lambda)}^2,
$$
holds, with equality if and only if $p=cH_n^\lambda$, where $$\lambda_n^2=2\{n+\lambda[1-(-1)^n]\}.$$
\end{itemize}
\end{thm}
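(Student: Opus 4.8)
The plan is to treat parts (i) and (ii) uniformly, since each compares a quadratic functional of $p$ against the squared $L_2$-norm of a second-order Dunkl operator. For part (i) I introduce the Dunkl--Sturm--Liouville operator
$$\mathcal{L}p:=(1-x^2)\mathcal{D}_\lambda^2 p-(2\mu+1)x\,\mathcal{D}_\lambda p,$$
and for part (ii) its Hermite analogue $\mathcal{L}_H p:=\mathcal{D}_\lambda^2 p-2x\,\mathcal{D}_\lambda p$; these are exactly the operators built from the entries $A,B$ of Table~\ref{t1}, whose polynomial eigenfunctions (as used in \thmref{thm3}) are the generalized Gegenbauer and Hermite polynomials, so that $\mathcal{L}C_n^{(\mu,\lambda)}=-\lambda_n^2 C_n^{(\mu,\lambda)}$ and $\mathcal{L}_H H_n^\lambda=-\lambda_n^2 H_n^\lambda$, with the very eigenvalues $\lambda_n^2$ displayed in the theorem. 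The goal is then the single operator identity
$$\text{(RHS)}-\text{(LHS)}=\left\|\left(\mathcal{L}+\lambda_n^2\right)p\right\|_{L_2(w_{\lambda,\mu})}^2\ge0$$
(and its counterpart for $w_\lambda$), from which both the inequality and its sharpness are immediate.

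I would obtain this via two integration-by-parts identities. First, the first-order (Dirichlet) identity
$$\langle\mathcal{L}p,p\rangle_{w_{\lambda,\mu}}=-\left\|\sqrt{1-x^2}\,\mathcal{D}_\lambda p\right\|_{L_2(w_{\lambda,\mu})}^2,$$
which follows from the skew-symmetry of $\mathcal{D}_\lambda$ with respect to $|x|^{2\lambda}\,{\rm d}x$, after absorbing $(1-x^2)$ into the weight and noting that differentiating $(1-x^2)^{\mu+1/2}$ reproduces precisely the first-order coefficient $-(2\mu+1)x$; the boundary terms vanish because $(1-x^2)^{\mu+1/2}$ kills the endpoints. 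Second, the $n$-independent second-order identity
\begin{align*}
\left\|\mathcal{L}p\right\|_{L_2(w_{\lambda,\mu})}^2
={}&\left\|(1-x^2)\mathcal{D}_\lambda^2 p\right\|_{L_2(w_{\lambda,\mu})}^2+(2\mu+1)\left\|\sqrt{1-x^2}\,\mathcal{D}_\lambda p\right\|_{L_2(w_{\lambda,\mu})}^2\\
&+2\lambda(2\mu+1)\langle(1-x^2)p',p'(-\cdot)\rangle_{w_{\lambda,\mu}}-2\lambda^3(2\mu+1)\left\|\sqrt{1-x^2}\,\sigma(p)\right\|_{L_2(w_{\lambda,\mu})}^2,
\end{align*}
obtained by expanding $\|\mathcal{L}p\|^2=\|(1-x^2)\mathcal{D}_\lambda^2p\|^2-2(2\mu+1)\langle(1-x^2)\mathcal{D}_\lambda^2p,\,x\mathcal{D}_\lambda p\rangle+(2\mu+1)^2\|x\mathcal{D}_\lambda p\|^2$ and reducing the middle inner product by Dunkl integration by parts. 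Here the non-local part $\lambda\sigma(\cdot)$ of $\mathcal{D}_\lambda$ is precisely what produces the reflected term $\langle(1-x^2)p',p'(-\cdot)\rangle_{w_{\lambda,\mu}}$ and the $\sigma(p)$-term.

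Granting these, I would expand $\|(\mathcal{L}+\lambda_n^2)p\|^2=\|\mathcal{L}p\|^2+2\lambda_n^2\langle\mathcal{L}p,p\rangle+\lambda_n^4\|p\|^2$, insert the Dirichlet identity into the middle term and the second-order identity into the first, and verify that the result matches $\text{(RHS)}-\text{(LHS)}$ term by term; in particular the $n$-dependent coefficient $2\lambda_n^2-2\mu-1$ is assembled from the $2\lambda_n^2$ generated by the Dirichlet identity in the cross term and the $-(2\mu+1)$ coming from the second-order identity. Since the right-hand side is a genuine square it is nonnegative, and equality forces $(\mathcal{L}+\lambda_n^2)p=0$; as $\mathcal{L}$ is diagonalized by $\{C_k^{(\mu,\lambda)}\}$ with eigenvalues $-\lambda_k^2$ that increase strictly within each parity class, the eigenvalue $\lambda_n^2$ is attained on $\mathcal{P}_n$ only by $C_n^{(\mu,\lambda)}$, giving equality iff $p=cC_n^{(\mu,\lambda)}$. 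Part (ii) runs identically with $\mathcal{L}_H$, $w_\lambda$, $H_n^\lambda$ and the Hermite eigenvalues, the weight computation being simpler because $e^{-x^2}$ replaces $(1-x^2)^{\mu-1/2}$.

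The main obstacle is the second-order identity: the reflection hidden in $\mathcal{D}_\lambda$ means $x\mathcal{D}_\lambda p$ and $(1-x^2)\mathcal{D}_\lambda^2 p$ do not integrate by parts cleanly, and one must carefully track the cross terms between the local part $p'$ and the non-local part $\sigma(p)$, which is where $p'(-\cdot)$ and the precise constants $2\lambda(2\mu+1)$ and $2\lambda^3(2\mu+1)$ originate. I expect the cleanest bookkeeping to come from the even--odd splitting $p=p_e+p_o$: the even weight gives $\langle p_e,p_o\rangle_{w_{\lambda,\mu}}=0$, one has $\sigma(p)=2p_o/x$ and $\langle p',p'(-\cdot)\rangle_{w_{\lambda,\mu}}=\|p_o'\|_{L_2(w_{\lambda,\mu})}^2-\|p_e'\|_{L_2(w_{\lambda,\mu})}^2$, and $\mathcal{D}_\lambda$ interchanges the two parities. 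Organizing every term along this decomposition should reduce the identity to two ordinary weighted integration-by-parts computations that pin down the constants, while simultaneously making transparent the vanishing of the boundary contributions at $x=\pm1$ and the integrability of all integrands at the origin despite the factor $|x|^{2\lambda}$.
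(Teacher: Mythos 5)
Your proposal is correct and follows essentially the same route as the paper: your $(\mathcal{L}+\lambda_n^2)p$ is exactly the paper's $\mathcal{L}_n[\,p\,]=A\,\mathcal{D}_\lambda^2p+B\,\mathcal{D}_\lambda p+\lambda_n^2p$, your Dirichlet identity and second-order identity are precisely the paper's identities \eqref{3.24} and \eqref{3.23} (with $B'(0)=-(2\mu+1)$, resp. $-2$), and the equality case is settled in both arguments by the characterization of the polynomial solutions of $\mathcal{L}_n[\,p\,]=0$ in Propositions \ref{prop1} and \ref{prop2}. The only cosmetic difference is that the paper carries out the second-order integration by parts directly via \eqref{2.2}--\eqref{2.3} rather than through an even--odd splitting.
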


If $\lambda=0$, then Theorem \ref{thm5} reduces to the following corollary, which has been proved in \cite{AM1,AM}.
\begin{cor}\label{cor} Let  $\mu>-1/2$.

\begin{itemize}
\item[(i)] For all $p\in\mathcal{P}_n$ the inequality
$$
  \|\sqrt{1-x^2}\, p'\|_{L_2(\mathrm{w}_\mu)}^2
  \le \frac{n^2(n+2\mu)^2}{2n(n+2\mu)-2\mu-1}\|p\|_{L_2(\mathrm{w}_\mu)}^2+\frac1{2n(n+2\mu)-2\mu-1}\|(1-x^2)p''\|_{L_2(\mathrm{w}_\mu)}^2,
$$
holds, with equality if and only if $p=cJ_n^{(\mu-1/2,\mu-1/2)}$, where $\mathrm{w}_\mu(x)=(1-x^2)^{\mu-1/2}$ is the Gegenbauer weight on the interval $[-1,1]$.

\item[(ii)] For all $p\in\mathcal{P}_n$ the inequality
$$
  \| p'\|_{L_2(w_0)}^2
  \le \frac{2n^2}{2n-1}\|p\|_{L_2(w_0)}^2+\frac1{4n-2}\|p''\|_{L_2(w_0)}^2,
$$
holds, with equality if and only if $p=cH_n$, where $H_n$ is the classical Hermite polynomial and $w_0(x)=e^{-x^2}$ is the classical Hermite weight on the real line $\mathbb R$.
\end{itemize}
\end{cor}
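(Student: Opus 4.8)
\medskip

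\noindent\textit{Proof proposal.} The plan is to deduce both inequalities from the nonnegativity of a single perfect square built from the self-adjoint Dunkl--Sturm--Liouville operator whose eigenfunctions are the relevant generalized orthogonal polynomials. Write $\mathcal{L}=A(x)\mathcal{D}_\lambda^2+B(x)\mathcal{D}_\lambda$ with $A,B$ as in Table~\ref{t1}, so that $\mathcal{L}C_n^{(\mu,\lambda)}=-\lambda_n^2C_n^{(\mu,\lambda)}$ in case (i) and $\mathcal{L}H_n^{\lambda}=-\lambda_n^2H_n^{\lambda}$ in case (ii), where $\lambda_n^2$ is exactly the number appearing in the statement. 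First I would record the integration-by-parts toolkit: $\mathcal{D}_\lambda$ is skew-adjoint with respect to $|x|^{2\lambda}\,\mathrm{d}x$, and because the remaining factor of $W_\lambda$ is even, its adjoint on $L_2(W_\lambda)$ differs from $-\mathcal{D}_\lambda$ by multiplication by an explicit odd function. From this one checks that $\mathcal{L}$ is self-adjoint on $L_2(W_\lambda)$ and satisfies the Green identity $\langle\mathcal{L}p,p\rangle_{W_\lambda}=-\|\sqrt{A}\,\mathcal{D}_\lambda p\|_{L_2(W_\lambda)}^2$. The boundary terms vanish on $\mathbb R$ because of the Gaussian factor, and on $[-1,1]$ because $(1-x^2)^{\mu+1/2}$ kills them (for $-1/2<\mu\le 1/2$ this requires a short limiting argument).

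\smallskip The heart of the argument is to recognize the difference of the two sides of the asserted inequality as the perfect square
\[
\text{(RHS)}-\text{(LHS)}=\big\|(\mathcal{L}+\lambda_n^2)p\big\|_{L_2(W_\lambda)}^2 .
\]
Expanding gives $\|\mathcal{L}p\|^2+2\lambda_n^2\langle\mathcal{L}p,p\rangle+\lambda_n^4\|p\|^2$: the term $\lambda_n^4\|p\|^2$ is already present on the right, and by the Green identity $2\lambda_n^2\langle\mathcal{L}p,p\rangle=-2\lambda_n^2\|\sqrt A\,\mathcal{D}_\lambda p\|^2$ matches the $\lambda_n^2$-linear part of the coefficient multiplying $\|\sqrt A\,\mathcal{D}_\lambda p\|^2$. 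What remains is the $\lambda_n$-free identity expressing $\|\mathcal{L}p\|^2=\|A\mathcal{D}_\lambda^2p+B\mathcal{D}_\lambda p\|^2$ through the four quantities $\|A\mathcal{D}_\lambda^2p\|^2$, $\|\sqrt A\,\mathcal{D}_\lambda p\|^2$, $\langle Ap',p'(-\cdot)\rangle$ and $\|\sqrt A\,\sigma(p)\|^2$. I would prove this by splitting $p=p_e+p_o$ into even and odd parts: since $\mathcal{L}$ and the reflection $x\mapsto -x$ commute and preserve parity, each quadratic form above decomposes into its even and odd contributions with no cross terms. On $p_e$ one has $\sigma(p_e)=0$ and $\mathcal{D}_\lambda p_e=p_e'$, so the identity collapses to the classical ($\lambda=0$) one underlying Corollary~\ref{cor}; on $p_o$ the Dunkl correction $\lambda\sigma$ survives, and integrating by parts with the Leibniz rule $\mathcal{D}_\lambda(fg)=f'g+f\mathcal{D}_\lambda g$ (valid for even $f$) together with $\sigma(p_o)=2p_o/x$ produces precisely the reflection terms $\langle Ap',p'(-\cdot)\rangle$ and $\|\sqrt A\,\sigma(p)\|^2$.

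\smallskip Once the identity is established, nonnegativity is immediate and equality holds iff $(\mathcal{L}+\lambda_n^2)p=0$, i.e.\ $p$ lies in the $(-\lambda_n^2)$-eigenspace of $\mathcal{L}$. Expanding $p=\sum_{k=0}^{n}a_k\Phi_k$ in the eigenbasis ($\Phi_k=C_k^{(\mu,\lambda)}$ or $H_k^{\lambda}$, with $\mathcal{L}\Phi_k=-\lambda_k^2\Phi_k$) forces $a_k=0$ whenever $\lambda_k^2\ne\lambda_n^2$. To reach $p=c\,\Phi_n$ I would use that $\lambda_k^2$ is strictly increasing within each parity class, so that $\lambda_k^2=\lambda_n^2$ with $0\le k\le n$ and $k\equiv n\pmod 2$ forces $k=n$.

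\smallskip I expect the main obstacle to be twofold. The decisive computational step is the $\lambda_n$-free identity on the odd part, where the reflection operator must be carried through several integrations by parts; the presence of the mixed term $\langle Ap',p'(-\cdot)\rangle$ and of $\|\sqrt A\,\sigma(p)\|^2$ shows that $\|\mathcal{L}p\|^2$ does \emph{not} expand into $\mathcal{D}_\lambda$-norms alone, so the exact coefficients must be matched by careful bookkeeping of the second-order form of $\mathcal{D}_\lambda^2$ and of $\langle\mathcal{D}_\lambda f,g\rangle=-\langle f,\mathcal{D}_\lambda g\rangle$ on $|x|^{2\lambda}\,\mathrm{d}x$, tracking the parities of $f$ and $g$ throughout. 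The second delicate point is the equality analysis across parities: the sequence $\lambda_k^2$ need not be globally monotone---this is the very non-monotonicity responsible for the threshold $(2\lambda-1)(2\mu-1)>4$ and the switch of extremizer from $C_n^{(\mu,\lambda)}$ to $C_{n-1}^{(\mu,\lambda)}$ in Theorem~\ref{thm3}---so one must verify that no cross-parity eigenvalue coincides with $\lambda_n^2$ for $0\le k\le n$, which is what pins the eigenspace meeting $\mathcal{P}_n$ down to $\mathrm{span}\,\Phi_n$.
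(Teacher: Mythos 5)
Your proposal is correct and follows essentially the same route as the paper: Corollary \ref{cor} is obtained by setting $\lambda=0$ in Theorem \ref{thm5}, which the paper proves by exactly your perfect-square argument $\|A\mathcal{D}_\lambda^2p+B\mathcal{D}_\lambda p+\lambda_n^2p\|^2\ge0$ combined with the Green identity \eqref{3.24} and the $\lambda_n$-free expansion \eqref{3.23} (your displayed identity is off only by the positive normalizing factor $2\lambda_n^2+B'(0)$, equal to $2n(n+2\mu)-2\mu-1$ resp. $4n-2$, and at $\lambda=0$ all reflection and $\sigma$ terms drop out). The only tactical differences are that the paper derives the key identity by direct $\sigma$-calculus via \eqref{2.2}--\eqref{2.3} rather than parity splitting, and settles the equality case through the coefficient recurrences of Propositions \ref{prop1} and \ref{prop2} rather than an eigenbasis expansion; your concern about cross-parity eigenvalue coincidences is vacuous at $\lambda=0$, since $k(k+2\mu)=n(n+2\mu)$ with $0\le k\le n$ and $\mu>-1/2$ forces $k=n$.
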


\subsection{Outline of the paper}

\

In Section \ref{sec2} we devote to presenting some results about
generalized weights and orthogonal polynomials, including
generalized Gegenbauer polynomials and generalized Hermite
polynomials. In Section 3 we will give the proofs of Theorems 1
and 2. In Section 4 we will show Theorems 3 and 4. In Section 5 we
will prove Theorem 5.

\section{Generalized weights and orthogonal polynomials}\label{sec2}

In this section we recall some properties of  orthogonal
polynomials which can be found in \cite[Chapter 1]{DunXu}. In the
follows, we always assume $I=\mathbb R$ or $I=[-1,1]$. First we
state some observations. Obviously, the classical Hermite weight
$e^{-x^2}$ and the Gegenbauer weight
$(1-x^2)^{\mu-\frac12},\mu>-1/2$ satisfy a first-order differential equation systems of
the form
\begin{equation}\label{2.1}
\begin{cases}
\quad\dfrac{\rm d}{{\rm d}x}(A(x)w (x))=B(x)w(x),
\\
\quad w(x)=w(-x),
\end{cases}
\end{equation}
where $A(x)$ and $B(x)$ are given in Table \ref{t1}. Then the weight functions we considered are the form of
$$W_\lambda(x):=|x|^{2\lambda}w(x),\ \lambda\ge0,$$where $w\in CW_e:=\big\{(1-x^2)^{\mu-\frac12},e^{-x^2}\big\}$.
 We also note that $B(x)=B'(0)x$, and $AW_\lambda$ vanishes at the end points of interval $I$, i.e., $AW_\lambda|_{\partial I}=0$. Here $\partial I=\{\pm\infty\}$ if $I=\mathbb R$ and $\partial I=\{\pm1\}$ if $I=[-1,1]$ .

It is easy to check that for any $p,q\in\mathcal{P}_n$,
\begin{equation}\label{2.2}
2\int_I q\frac{\sigma(p)}xAW_\lambda=\int_I
\sigma(p)\sigma(q)AW_\lambda,
\end{equation}
and
\begin{equation}\label{2.3}
\int_Iq\frac{p(x)+p(-x)}x\,AW_\lambda=\int_Ip\,\sigma(q)\,AW_\lambda.
\end{equation}

\subsection{Generalized Gegenbauer polynomials}

\

For $\alpha,\beta>-1$, the classical Jacobi polynomials $J_n^{(\alpha,\beta)},\, n\in\mathbb N$ are given by
$$J_n^{(\alpha,\beta)}(x):=\frac{(-1)^n}{2^n n!}(1-x)^{-\alpha}(1+x)^{-\beta}\frac{{\rm d}^n}{{\rm d}x}\left[(1-x)^{\alpha+n}(1+x)^{\beta+n}\right],$$
which is an orthogonal basis for $\mathcal{P}_n$   with respect to
the classical Jacobi weight $(1-x)^\alpha(1+x)^\beta$,
$x\in[-1,1]$. It is well known that $J_n^{(\alpha,\beta)}$ is the
unique polynomial solution of the differential equation
$$
(1-x^2)p''-[\alpha-\beta+(\alpha+\beta+2)x]p'+n(n+\alpha+\beta+1)p=0
$$in the meaning of a constant.

Let $\lambda\ge0$ and $\mu>-1/2$. The generalized Gegenbauer polynomial $C_n^{(\mu,\lambda)}$ can be given by the classical Jacobi polynomials. Precisely,
$$\Bigg\{\begin{aligned}&C_{2m}^{(\mu,\lambda)}(x)=\frac{(\lambda+\mu)_m}{(\lambda+\frac12)_m}\,J_m^{(\mu-1/2,\lambda-1/2)}(2x^2-1),
\\&C_{2m+1}^{(\mu,\lambda)}(x)=\frac{(\lambda+\mu)_{m+1}}{(\lambda+\frac12)_{m+1}}\,xJ_m^{(\mu-1/2,\lambda+1/2)}(2x^2-1).\end{aligned}$$
Moreover, it follows from \cite{BeGa} that the coefficients $a_{k},k=0,\dots,n$ of $C_n^{(\mu,\lambda)}$  satisfy
$a_k=0$ if $n-k$ is odd, and one of the following recurrence relations:
\begin{equation}\label{1}
  \left\{\begin{aligned}& (k+2)(k+2\lambda+1)a_{k+2}=(k-n)(k+n+2\lambda+2\mu)a_{k}, &&{\rm if}\ n,k\ {\rm is\ even}, \\
& (k+1)(k+2\lambda+2)a_{k+2}=(k-n)(k+n+2\lambda+2\mu)a_{k}, &&{\rm if}\ n,k\ {\rm is\ odd}.
 \end{aligned}\right.
\end{equation}
It is well known that  $C_s^{(\mu,\lambda)},s=0,\dots,n$ form an
orthogonal basis for $\mathcal{P}_n$ with respect to the
generalized Gegenbauer weight
$w_{\lambda,\mu}(x)=|x|^{2\lambda}(1-x^2)^{\mu-\frac12}$ on $[-1,1]$.
Meanwhile, it has been proved in \cite{BeGa} that the generalized
Gegenbauer polynomials $C_n^{(\mu,\lambda)}$ is the unique polynomial
solution of differential-difference  equation system
\begin{equation}\label{2.4}
\Bigg\{\begin{aligned}&\  (1-x^2)\mathcal{D}_\lambda^2p-(2\mu+1)x\mathcal{D}_\lambda p+\lambda_n^2p=0,\\&\ p(-x)=(-1)^np(x),
 \end{aligned}
\end{equation}
with
\begin{equation}\label{2.5}
\lambda_n^2=\left\{n(n+2\lambda+2\mu)+2\lambda\mu[1-(-1)^n]\right\}
\end{equation}
in the meaning of a constant. In fact,  the symmetric property in \eqref{2.4} is not necessary, as stated in the following proposition.

\begin{prop}\label{prop1} Let $\lambda\ge0$. If the differential-difference  equation
\begin{equation}\label{2.4-3}
\mathcal{L}[\,p\,]:=(1-x^2)\mathcal{D}_\lambda^2p-(2\mu+1)x\mathcal{D}_\lambda
p+M^2p=0,
\end{equation}
has a nontrivial solution $p\in\mathcal{P}_n$ for some $M>0$, then
$p=cC_s^{(\mu,\lambda)}$ with $M=\lambda_s$ given by \eqref{2.5},
where $s=1,\dots,n$ and $c$ is an arbitrary nonzero constant.
\end{prop}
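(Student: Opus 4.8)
The plan is to exploit the parity structure of $\mathcal{L}$. For a function $f$ write $f=f_e+f_o$ for its decomposition into even and odd parts. Directly from \eqref{1.8} one checks that $\mathcal{D}_\lambda$ reverses parity: if $f$ is even then $\sigma(f)=0$ and $\mathcal{D}_\lambda f=f'$ is odd, while if $f$ is odd then $f'$ is even and $\sigma(f)=2f/x$ is even, so $\mathcal{D}_\lambda f$ is even. Hence $\mathcal{D}_\lambda^2$, the operator $x\mathcal{D}_\lambda$, and the identity all preserve parity, and therefore so does $\mathcal{L}$. Consequently $\mathcal{L}[p]=0$ forces $\mathcal{L}[p_e]=0$ and $\mathcal{L}[p_o]=0$ separately, and it is enough to analyse a nontrivial solution of a single parity.

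Next I would analyse a fixed-parity solution $p=\sum_k a_kx^k$ of degree $N$. Using $\mathcal{D}_\lambda(x^k)=\gamma_k x^{k-1}$ with $\gamma_k=k+\lambda(1-(-1)^k)$, a short computation gives $\mathcal{L}[x^k]=\delta_k x^{k-2}+\big(M^2-\delta_k-(2\mu+1)\gamma_k\big)x^k$ with $\delta_k=\gamma_k\gamma_{k-1}$, so that $\mathcal{L}[p]=0$ is equivalent to the two-term recurrence $\delta_{k+2}a_{k+2}+\big(M^2-\delta_k-(2\mu+1)\gamma_k\big)a_k=0$, which couples only indices of the same parity. The coefficient of $x^N$ yields the indicial relation $M^2=\delta_N+(2\mu+1)\gamma_N$, and a short simplification (splitting the cases $N$ even and $N$ odd) identifies the right-hand side with $\lambda_N^2$ of \eqref{2.5}. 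Since the parity of $p$ is $(-1)^N$ and now $M=\lambda_N$, the polynomial $p$ satisfies the full system \eqref{2.4} with $n=N$, and the uniqueness recorded there gives $p=cC_N^{(\mu,\lambda)}$. Equivalently one may set $t=2x^2-1$ and write $p(x)=P(t)$ in the even case, $p(x)=xP(t)$ in the odd case; a direct substitution turns $\mathcal{L}[p]=0$ into the Jacobi equation with parameters $(\mu-\tfrac12,\lambda-\tfrac12)$ resp. $(\mu-\tfrac12,\lambda+\tfrac12)$, and the stated uniqueness of polynomial solutions of the Jacobi equation gives the same conclusion.

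It remains to recombine the even and odd parts. Within one parity the eigenvalues are simple: $\lambda_{2a}^2=2a(2a+2\lambda+2\mu)$ is strictly increasing in the integer $a\ge0$, and $\lambda_{2b+1}^2=(2b+1+2\lambda)(2b+1+2\mu)$ is strictly increasing in $b\ge0$ for $\lambda\ge0$, $\mu>-\tfrac12$ (both factors being positive), so the downward recurrence is always solvable and each parity component is pinned down up to a scalar, as asserted. The delicate point---and the step I expect to be the main obstacle---is to show that $p_e$ and $p_o$ cannot both be nonzero, i.e. that the even branch $\{\lambda_{2a}^2\}$ and the odd branch $\{\lambda_{2b+1}^2\}$ never coincide; only then is the solution forced to be a single $C_s^{(\mu,\lambda)}$ rather than a genuine mixture $c_1C_{2a}^{(\mu,\lambda)}+c_2C_{2b+1}^{(\mu,\lambda)}$ spanning a two-dimensional $M$-eigenspace. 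This disjointness is not automatic: already for $a=1$, $b=0$ the condition $\lambda_2^2=\lambda_1^2$ reduces to $4\lambda\mu-2\lambda-2\mu=3$, which admits admissible pairs such as $\lambda=1$, $\mu=\tfrac52$. A rigorous argument must therefore control these cross-parity coincidences---either by restricting the parameters or by recording that at such exceptional $M$ the conclusion holds only up to the corresponding linear combination---and this is the analysis I would treat with the greatest care.
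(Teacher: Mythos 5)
Your argument follows essentially the same route as the paper's proof: reduce $\mathcal{L}[\,p\,]=0$ to a two-term coefficient recurrence coupling only indices of equal parity, read off the indicial relation from the top coefficient, and invoke the recurrence \eqref{1} characterizing $C_s^{(\mu,\lambda)}$. Your bookkeeping is correct: with $\gamma_k=k+\lambda(1-(-1)^k)$ and $\delta_k=\gamma_k\gamma_{k-1}$, the identity $\delta_N+(2\mu+1)\gamma_N=\lambda_N^2$ checks out in both parities, and the parity decomposition you put up front is implicit in the paper's recurrence. The within-parity monotonicity of $\lambda_{2a}^2$ and $\lambda_{2b+1}^2$ that you verify is what pins each parity component down to a scalar multiple.

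The ``delicate point'' you flag is a genuine gap, and it is a gap in the paper's own proof as well, not only in your write-up. The paper deduces ``$M=\lambda_s$ and $a_{s-1}=a_{s-3}=\cdots=0$'' from $a_s\neq0$, $a_{s+1}=a_{s+2}=0$; but the step $a_{s-1}=0$ rests on the recurrence at $k=s-1$, which reads $0=(\lambda_{s-1}^2-M^2)a_{s-1}$ and forces $a_{s-1}=0$ only if $\lambda_{s-1}^2\neq\lambda_s^2$. Your exceptional case is real and can be made fully explicit: for $\lambda=1$, $\mu=5/2$ one has $\lambda_1^2=(1+2\lambda)(1+2\mu)=18=2(2+2\lambda+2\mu)=\lambda_2^2$; since $\mathcal{D}_\lambda^2x=0$, the polynomial $p(x)=C_2^{(\mu,\lambda)}(x)+x$ satisfies $\mathcal{L}[\,p\,]=0$ with $M^2=18$ yet has no definite parity, so it is not a scalar multiple of any single $C_s^{(\mu,\lambda)}$. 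Hence the proposition as literally stated fails at such parameter/degree pairs; the correct conclusion is that $p$ belongs to the span of those $C_s^{(\mu,\lambda)}$, $1\le s\le n$, with $\lambda_s=M$ (at most one index per parity, by your monotonicity argument). This weaker statement suffices for Lemma \ref{lem2} and Theorem \ref{thm3}, where only the admissible set of values of $M$ matters, but the uniqueness claims about extremal polynomials would need a caveat at the exceptional parameters. To complete your proof you should therefore either impose the parity condition of \eqref{2.4} as a hypothesis, or restate the conclusion as membership in the corresponding eigenspace; as written, neither your proposal nor the paper rules out the two-dimensional case.
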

\begin{proof} For $1\le s\le n$, let $p(x)=\sum_{k=0}^sa_kx^k$ satisfy \eqref{2.4-3} with some $M>0$ and $a_s\neq 0$. Set $a_{s+1}=a_{s+2}=0$.
Then the straightforward calculation gives
$$\mathcal{D}_\lambda p=\sum_{k=1}^s\left\{k+\lambda\left[1-(-1)^k\right]\right\}a_kx^{k-1},$$
and
$$\mathcal{D}_\lambda^2p=\sum_{k=0}^s\left\{k+1+\lambda\left[1-(-1)^{k+1}\right]\right\}\left\{k+2+\lambda\left[1-(-1)^{k}\right]\right\}a_{k+2}x^{k},$$
which leads to
\begin{align*}
  \mathcal{L}[\,p\,]&=\sum_{k=0}^s\{\{k+1+\lambda[1-(-1)^{k+1}]\}\{k+2+\lambda[1-(-1)^{k}]\}a_{k+2}
\\&\quad+\{M^2-\{k+2\mu+\lambda[1-(-1)^{k-1}]\}\{k+\lambda[1-(-1)^k]\}\}a_k\}x^{k}.
\end{align*}
This yields that for all $k=0,\dots,s$,
$$\{k+1+\lambda[1-(-1)^{k+1}]\}\{k+2+\lambda[1-(-1)^{k}]\}a_{k+2}
=\{\{k+2\mu+\lambda[1-(-1)^{k-1}]\}\{k+\lambda[1-(-1)^k]\}-M^2\}a_k.$$
Since $a_s\neq0$ and $a_{s+1}=a_{s+2}=0$, we get  $M=\lambda_s$,
and
$$\left\{\begin{aligned} & a_{s-1}=a_{s-3}=\dots=0,\\ &(k+2)(k+2\lambda+1)a_{k+2}=(k-s)(k+s+2\lambda+2\mu)a_{k}, &&{\rm if}\ s,k\ {\rm is\ even}, \\
& (k+1)(k+2\lambda+2)a_{k+2}=(k-s)(k+s+2\lambda+2\mu)a_{k}, &&{\rm if}\ s,k\ {\rm is\ odd}.
 \end{aligned}\right.
 $$which, by \eqref{1}, means  $p=cC_s^{(\mu,\lambda)}$.
This completes the proof.
\end{proof}

We also remark that for $n$ being even, the generalized Gegenbauer polynomials $C_n^{(\mu,\lambda)}$ is the unique polynomial solution of differential equation
$$
(1-x^2)p''-(2\lambda+2\mu+1)xp'+\frac{2\lambda}xp'+n(n+2\lambda+2\mu)p=0
$$
in the meaning of a constant, which can be seen from the following proposition.

\begin{prop}\label{prop3-0} Let $\lambda>0$ and $\mu>-1/2$. If $n\ge2$ and the differential equation
\begin{equation}\label{2.4-0}
L[\,p\,]:=(1-x^2)p''-(2\lambda+2\mu+1)xp'+\frac{2\lambda}xp'+M^2p=0,
\end{equation}
has a nontrivial solution $p\in\mathcal{P}_n$ for some $M>0$, then
$p=cC_s^{(\mu,\lambda)}$ with $2\le s\le n$
 being even and $M^2=s(s+2\lambda+2\mu)$,  where
$c$ is an arbitrary nonzero constant.
\end{prop}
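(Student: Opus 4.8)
The plan is to mirror the coefficient-comparison argument used for Proposition \ref{prop1}, adapting it from the Dunkl operator to the ordinary differential operator $L$. I would begin by writing a candidate nontrivial solution as $p(x)=\sum_{k=0}^{s}a_kx^k$ with $a_s\neq0$, where $s=\deg p\le n$, and set $a_{s+1}=a_{s+2}=0$. I then compute $p'=\sum_{k\ge1}ka_kx^{k-1}$ and $p''=\sum_{k\ge2}k(k-1)a_kx^{k-2}$ and substitute into $L[\,p\,]$. The only genuinely non-polynomial contribution comes from the term $\frac{2\lambda}{x}p'=2\lambda\sum_{k\ge1}ka_kx^{k-2}$, whose lowest-order piece is $2\lambda a_1x^{-1}$.

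The first key step is to note that, since every other term in $L[\,p\,]$ is a polynomial, the equation $L[\,p\,]=0$ can hold only if the coefficient of $x^{-1}$ vanishes, i.e. $2\lambda a_1=0$. Because $\lambda>0$, this forces $a_1=0$. This is precisely where the hypothesis $\lambda>0$ is used, and it is the crux of the parity conclusion: for $\lambda=0$ the term disappears and odd solutions reappear (the Jacobi case).

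Next I collect the coefficient of $x^k$ for $k\ge0$. A short computation, grouping the $a_{k+2}$ and $a_k$ contributions, gives the two-term recurrence
$$(k+2)(k+2\lambda+1)a_{k+2}=\left[k(k+2\lambda+2\mu)-M^2\right]a_k,\qquad k=0,1,\dots,s.$$
Taking $k=s$ together with $a_{s+2}=0$ and $a_s\neq0$ yields $M^2=s(s+2\lambda+2\mu)$. Feeding $a_1=0$ into the recurrence (the leading factors $(k+2)(k+2\lambda+1)$ are strictly positive for $k\ge0$ and $\lambda>0$, so the relation propagates unambiguously) forces $a_3=a_5=\cdots=0$, so $p$ is even; hence $s$ is even, and since $M>0$ excludes the constant case $s=0$, we obtain $2\le s\le n$.

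Finally, substituting $M^2=s(s+2\lambda+2\mu)$ and factoring the bracket as $k(k+2\lambda+2\mu)-s(s+2\lambda+2\mu)=(k-s)(k+s+2\lambda+2\mu)$ turns the recurrence, for even $k$, into $(k+2)(k+2\lambda+1)a_{k+2}=(k-s)(k+s+2\lambda+2\mu)a_k$, which is exactly the even-index relation in \eqref{1} defining $C_s^{(\mu,\lambda)}$. Since this relation determines all even coefficients from $a_s$ up to a common scale, we conclude $p=cC_s^{(\mu,\lambda)}$. I expect the only real obstacle to be the bookkeeping around the $\frac{2\lambda}{x}p'$ term: one must keep the $x^{-1}$ coefficient explicitly in view so that the forced vanishing $a_1=0$, and thereby the evenness of $s$, is properly justified rather than silently discarded; the rest of the algebra is routine and parallels Proposition \ref{prop1}.
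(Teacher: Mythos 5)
Your proposal is correct and follows essentially the same route as the paper's proof: the paper encodes your explicit "$x^{-1}$ coefficient must vanish" step by adopting the convention $a_{-1}=0$ and reading the recurrence at $k=-1$, which gives $2\lambda a_1=0$ and hence $a_1=0$, exactly as you argue. The remaining steps (extracting $M^2=s(s+2\lambda+2\mu)$ from $k=s$, propagating $a_1=0$ to all odd coefficients, and matching the even-index recurrence \eqref{1}) coincide with the paper's argument.
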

\begin{proof} For $2\le s\le n$, let $p(x)=\sum_{k=0}^sa_kx^k$ satisfy \eqref{2.4-0} with some $M>0$ and $a_{s}\neq0$. Set $a_{-1}=a_{s+1}=a_{s+2}=0$. Then
$$p'(x)=\sum_{k=1}^ska_kx^{k-1}\ \ {\rm and}\ \ p''(x)=\sum_{k=2}^sk(k-1)a_kx^{k-2},$$
which leads to
\begin{align*}
  (1-x^2)p''(x)&=\sum_{k=0}^{s-2}(k+2)(k+1)a_{k+2}x^{k}-\sum_{k=2}^sk(k-1)a_kx^{k}
\\&=\sum_{k=-1}^{s}\left[(k+2)(k+1)a_{k+2}-k(k-1)a_k\right]x^{k},
\end{align*}
and
\begin{align*}
  \frac{2\lambda}xp'(x)&=2\lambda\sum_{k=1}^ska_kx^{k-2}
  =2\lambda\sum_{k=-1}^{s}(k+2)a_{k+2}x^{k}.
\end{align*}Thus we obtain
$$L[\,p\,]=\sum_{k=-1}^{s}\Big\{(k+2)(k+2\lambda+1)a_{k+2}+\left[M^2-k(k+2\lambda+2\mu)\right]a_k\Big\}x^{k}.$$
which yields
$$(k+2)(k+2\lambda+1)a_{k+2}=\left[k(k+2\lambda+2\mu)-M^2\right]a_k,\ k=-1,0,1,\dots,s.$$
Since $a_s\neq0$ and $a_{s+2}=0$, we get $M^2=s(s+2\lambda+2\mu)$. It
follows from the equality $a_{-1}=0$  that $a_{1}=a_3=\dots=0$,
which yields that $s$ is even. We obtain
$$(k+2)(k+2\lambda+1)a_{k+2}=(k-n)(k+s+2\lambda+2\mu)a_k,\ k=0,2,4,\dots,s-2.
$$According to \eqref{1} we know  $p=cC_s^{(\mu,\lambda)}$.
This completes the proof.
\end{proof}

\subsection{Generalized Hermite polynomials}

\

For $ \kappa>-1$, the generalized Laguerre polynomials $L_n^\kappa,\, n\in\mathbb N$ are given by
$$L_n^\kappa(x):=\frac1{n!}x^{-\kappa}e^x\left(\frac{\rm d}{{\rm d}x}\right)^n(x^{n+\kappa}e^{-x}),$$
which is an orthogonal basis for $\mathcal{P}_n$  with respect to
the generalized Laguerre weight $x^\kappa e^{-x}$ on $\mathbb
R_+$.

Let $\lambda\ge0$.
The generalized Hermite polynomials $H_n^\lambda$ can be given by the generalized Laguerre polynomials. More precisely,
$$\Bigg\{\begin{aligned}&H_{2m}^\lambda(x)=(-1)^m2^{2m}m!\,L_m^{\lambda-1/2}(x^2),
\\&H_{2m+1}^\lambda(x)=(-1)^m2^{2m+1}m!\,xL_m^{\lambda+1/2}(x^2).\end{aligned}
$$
Moreover, it follows from \cite{BeGa} that the coefficients $a_{k},k=0,\dots,n$ of $H_n^\lambda$  satisfy $a_k=0$ if $n-k$ is odd, and one of the following recurrence relations:
\begin{equation}\label{2}
  \Bigg\{\begin{aligned}& (k+2)(k+2\lambda+1)a_{k+2}=2(k-n)a_{k}, &&{\rm if}\ n,k\ {\rm is\ even}, \\
& (k+1)(k+2\lambda+2)a_{k+2}=2(k-n)a_{k}, &&{\rm if}\ n,k\ {\rm is\ odd}.
 \end{aligned}
\end{equation}It is well known that $H_s^\lambda,s=0,\dots,n$ form an orthogonal basis for $\mathcal{P}_n$ with respect to
the generalized Hermite weight  $w_\lambda(x)=|x|^{2\lambda}e^{-x^2}$
on $\mathbb R$ . Meanwhile, it has been proved in \cite{BeGa} that
the generalized Hermite polynomial $H_n^\lambda$ is the unique polynomial
solution of the differential-difference  equation system
\begin{equation}\label{2-1}\Bigg\{\begin{aligned}
&\ \mathcal{D}_\lambda^2p-2 x\mathcal{D}_\lambda p+\lambda_n^2p=0,
\\&\  p(-x)=(-1)^np(x),
 \end{aligned}
 \end{equation}with
\begin{equation}\label{2.8}
\lambda_n^2=2\{n+\lambda[1-(-1)^n]\}
\end{equation}in the meaning of a constant. In fact, the symmetric property in \eqref{2-1} is not necessary, as stated in the following proposition.
\begin{prop}\label{prop2}Let $\lambda\ge0$. If the differential-difference  equation
\begin{equation}\label{2.4-2}
\mathcal{L}[\,p\,]:=\mathcal{D}_\lambda^2p-2 x\mathcal{D}_\lambda
p+M^2p=0
\end{equation}
has a nontrivial solution $p\in\mathcal{P}_n$ for some $M>0$, then
$p=cH_s^\lambda$ with $M=\lambda_s$ given by \eqref{2.8}, where
$s=1,\dots,n$ and $c$ is an arbitrary nonzero constant.
\end{prop}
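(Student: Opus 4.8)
The plan is to follow verbatim the strategy used for the generalized Gegenbauer case in Proposition~\ref{prop1}, exploiting the fact that the Dunkl operator acts almost diagonally on monomials. Fix $1\le s\le n$ and write a presumed nontrivial solution as $p(x)=\sum_{k=0}^s a_kx^k$ with $a_s\neq0$, and set $a_{s+1}=a_{s+2}=0$. The first step is the monomial identity $\mathcal{D}_\lambda x^k=\{k+\lambda[1-(-1)^k]\}x^{k-1}$, which follows immediately from \eqref{1.8} by separating even and odd $k$. Iterating it produces the closed forms for $\mathcal{D}_\lambda p$ and $\mathcal{D}_\lambda^2p$ as explicit polynomials, exactly as in the proof of Proposition~\ref{prop1}.

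Substituting these into $\mathcal{L}[\,p\,]=\mathcal{D}_\lambda^2p-2x\mathcal{D}_\lambda p+M^2p$ and collecting the coefficient of $x^k$ turns the equation \eqref{2.4-2} into the two-term recurrence
\begin{equation*}
\{k+1+\lambda[1-(-1)^{k+1}]\}\{k+2+\lambda[1-(-1)^{k}]\}a_{k+2}
=\bigl(2\{k+\lambda[1-(-1)^k]\}-M^2\bigr)a_k,
\end{equation*}
valid for $k=0,\dots,s$. Reading this at $k=s$, where the left-hand side vanishes because $a_{s+2}=0$ while $a_s\neq0$, forces $M^2=2\{s+\lambda[1-(-1)^s]\}$, that is $M=\lambda_s$ with $\lambda_s$ given by \eqref{2.8}.

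Next I would show that the coefficients of parity opposite to $s$ vanish. Evaluating the recurrence at $k=s-1$ and using $a_{s+1}=0$ yields $\bigl(2\{(s-1)+\lambda[1-(-1)^{s-1}]\}-\lambda_s^2\bigr)a_{s-1}=0$; provided this prefactor is nonzero one gets $a_{s-1}=0$, and descending through $k=s-3,s-5,\dots$ propagates $a_{s-1}=a_{s-3}=\cdots=0$. Once only the coefficients of the same parity as $s$ survive, inserting $M^2=\lambda_s^2$ into the recurrence collapses it precisely to the two relations in \eqref{2} — the even-index pair when $s$ is even, the odd-index pair when $s$ is odd. Matching this surviving recurrence together with $a_s\neq0$ against the defining recurrence of $H_s^\lambda$ then gives $p=cH_s^\lambda$ for a nonzero constant $c$.

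The step I expect to be the crux is the vanishing of the opposite-parity coefficients, since it hinges on the prefactors $2\{k+\lambda[1-(-1)^k]\}-\lambda_s^2$ being nonzero for the relevant $k$; equivalently, on the two families of eigenvalues $\lambda_k^2$ (indexed by even and odd $k$) staying separated along the chain descending from $a_{s+1}=0$. This is exactly the delicate point already handled in Proposition~\ref{prop1}, and I would treat it identically here, while the remaining reduction to \eqref{2} is routine bookkeeping.
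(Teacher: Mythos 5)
Your strategy is precisely the paper's: its proof of Proposition~\ref{prop2} writes the same two-term recurrence
$\{k+1+\lambda[1-(-1)^{k+1}]\}\{k+2+\lambda[1-(-1)^{k}]\}a_{k+2}=\{2\{k+\lambda[1-(-1)^k]\}-M^2\}a_k$,
reads off $M=\lambda_s$ at $k=s$, asserts $a_{s-1}=a_{s-3}=\cdots=0$, and matches the surviving relations against \eqref{2}. So there is no methodological difference to report.

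The point you flag as the crux is, however, a genuine gap, and it is \emph{not} ``handled in Proposition~\ref{prop1}'': there, too, the vanishing of the opposite-parity coefficients is simply asserted. Descending from $a_{s+1}=0$ one gets $a_k=0$ only when the prefactor $2\{k+\lambda[1-(-1)^k]\}-\lambda_s^2$ is nonzero. For $s$ odd and $k$ even this prefactor equals $2(k-s-2\lambda)<0$, so that half of the claim is safe. But for $s$ even and $k$ odd it equals $2(k+2\lambda-s)$, which vanishes at $k=s-2\lambda$ whenever $2\lambda$ is an odd positive integer and $s\ge 2\lambda+1$; at that $k$ the coefficient $a_k$ is left undetermined and the descent below it produces a nonzero odd component. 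Concretely, $\lambda_s=\lambda_{s-2\lambda}$ in this situation, and for $\lambda=1/2$ the polynomial $p=H_2^{1/2}+H_1^{1/2}\in\mathcal{P}_2$ solves \eqref{2.4-2} with $M=\lambda_2=\lambda_1=2$ yet is not a constant multiple of any single $H_s^{1/2}$; the general solution there is $c_1H_s^{\lambda}+c_2H_{s-2\lambda}^{\lambda}$. So your argument (and the paper's) needs either a hypothesis excluding $\lambda\in\{1/2,3/2,5/2,\dots\}$ --- equivalently, excluding the coincidence $\lambda_s=\lambda_{s'}$ for $s,s'$ of opposite parity --- or an explicit statement of the degenerate case; as written it silently assumes this nondegeneracy.
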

\begin{proof} For $1\le s\le n$, let $p(x)=\sum_{k=0}^sa_kx^k$ satisfy \eqref{2.4-2} with some $M>0$ and $a_s\neq 0$.  Set $a_{s+1}=a_{s+2}=0$.
Then following the proof of Proposition \ref{prop1} we have
\begin{align*}
  \mathcal{L}[\,p\,]=&\sum_{k=0}^s\{\{k+1+\lambda[1-(-1)^{k+1}]\}\{k+2+\lambda[1-(-1)^{k}]\}a_{k+2}
+\{M^2-2\{k+\lambda[1-(-1)^k]\}\}a_k\}x^{k},
\end{align*}which yields that for all $k=0,\dots,s$,
$$\{k+1+\lambda[1-(-1)^{k+1}]\}\{k+2+\lambda[1-(-1)^{k}]\}a_{k+2}=\{2\{k+\lambda[1-(-1)^k]\}-M^2\}a_k.$$
Since $a_s\neq0$ and $a_{s+1}=a_{s+2}=0$, we get  $M=\lambda_s$,
and
$$\left\{\begin{aligned}&a_{s-1}=a_{s-3}=\dots=0,\\& (k+2)(k+2\lambda+1)a_{k+2}=2(k-s)a_{k},\ \ \  {\rm if}\ s,k\ {\rm is\ even}, \\
& (k+1)(k+2\lambda+2)a_{k+2}=2(k-s)a_{k},\ \ \ {\rm if}\ s,k\ {\rm is\ odd},
 \end{aligned}\right.
 $$which, by \eqref{2}, means  $p=cH_s^\lambda$.
This completes the proof.
\end{proof}

We also remark that for $n$ being even, the generalized Hermite polynomial $H_n^\lambda$ is the unique polynomial solution of differential equation
$$
p''-2xp'+\frac{2\lambda}xp'+2np=0
$$
in the meaning of a constant, which can be seen from the following proposition.

\begin{prop}\label{prop4-0} Let $\lambda>0$ and $\mu>-1/2$. If $n\ge2$ and the differential equation
\begin{equation}\label{2.4-1}
L[\,p\,]:=p''-2xp'+\frac{2\lambda}xp'+M^2p=0,
\end{equation}
has a nontrivial solution $p\in\mathcal{P}_n$ for some $M>0$, then
$p=cH_s^\lambda$ with $M^2=2s$ and $2\le s\le n$ being even, where
$c$ is an arbitrary nonzero constant.
\end{prop}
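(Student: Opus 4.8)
The plan is to follow the template of the proof of Proposition \ref{prop3-0} essentially verbatim, since the operator $L$ here is the Hermite analogue of the Gegenbauer operator \eqref{2.4-0}, with the variable coefficient $k(k+2\lambda+2\mu)$ replaced by the linear term $2k$. First I would write $p(x)=\sum_{k=0}^s a_k x^k$ with $a_s\neq0$, and adopt the convention $a_{-1}=a_{s+1}=a_{s+2}=0$. Then I compute $p'(x)=\sum_{k=1}^s k a_k x^{k-1}$ and $p''(x)=\sum_{k=0}^{s-2}(k+2)(k+1)a_{k+2}x^k$, and crucially reindex the singular term as $\frac{2\lambda}{x}p'(x)=2\lambda\sum_{k=-1}^{s}(k+2)a_{k+2}x^k$, so that every contribution to $L[p]$ is a series in $x^k$ running from $k=-1$ to $k=s$. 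Collecting coefficients gives
$$L[p]=\sum_{k=-1}^{s}\Big\{(k+2)(k+2\lambda+1)a_{k+2}+\big[M^2-2k\big]a_k\Big\}x^k,$$
so that $L[p]=0$ forces the two-term recurrence $(k+2)(k+2\lambda+1)a_{k+2}=(2k-M^2)a_k$ for all $-1\le k\le s$.

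Next I would extract the three consequences exactly as in Proposition \ref{prop3-0}. Reading the recurrence at $k=s$ and using $a_{s+2}=0$ with $a_s\neq0$ yields $M^2=2s$. Reading it at $k=-1$ gives $2\lambda a_1=0$, so the hypothesis $\lambda>0$ forces $a_1=0$; the even/odd decoupling of the recurrence then propagates $a_1=a_3=\cdots=0$, which in turn forces $s$ to be even, since otherwise $a_s$ would be an odd-index coefficient and hence vanish. Finally, for even $k$ the recurrence reads $(k+2)(k+2\lambda+1)a_{k+2}=2(k-s)a_k$, which is precisely the even-index recurrence \eqref{2} characterizing $H_s^\lambda$; hence $p=cH_s^\lambda$, which completes the argument.

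The only genuinely delicate point — and the one worth isolating — is the role of the singular term $\frac{2\lambda}{x}p'$ at the index $k=-1$: it is exactly this term that produces the equation $2\lambda a_1=0$ and thereby forces the parity of the solution. I expect no real obstacle beyond this bookkeeping; the mild care needed is to invoke $\lambda>0$ (rather than merely $\lambda\ge0$) at precisely this step, which simultaneously guarantees that $L[p]$ has no $x^{-1}$ term and is therefore a genuine polynomial. Everything else is the same algebra as the Gegenbauer case.
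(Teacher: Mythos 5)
Your proposal is correct and follows essentially the same route as the paper's own proof: the same series expansion with the conventions $a_{-1}=a_{s+1}=a_{s+2}=0$, the same collected recurrence $(k+2)(k+2\lambda+1)a_{k+2}=(2k-M^2)a_k$ for $-1\le k\le s$, with $M^2=2s$ read off at $k=s$, the parity of $s$ forced by the $k=-1$ equation $2\lambda a_1=0$ (where $\lambda>0$ is used), and the identification $p=cH_s^\lambda$ via the even-index recurrence \eqref{2}. Your explicit remark on the role of the singular term at $k=-1$ is exactly the point the paper leaves implicit in the phrase ``it follows from the equality $a_{-1}=0$''.
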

\begin{proof} For $2\le s\le n$, let $p(x)=\sum_{k=0}^sa_kx^k$ satisfy \eqref{2.4-1} with some $M>0$ and $a_s\neq 0$. Set $a_{-1}=a_{s+1}=a_{s+2}=0$.
Then following the proof of Proposition \ref{prop3-0} we have
$$L[\,p\,]=\sum_{k=-1}^{s}\Big\{(k+2)(k+2\lambda+1)a_{k+2}+(M^2-2k)a_k\Big\}x^{k},$$
which yields that for all $k=-1,0,1,\dots,s$,
$$(k+2)(k+2\lambda+1)a_{k+2}+(M^2-2k)a_k=0.$$Since $a_s\neq0$ and $a_{s+2}=0$, we get $M^2=2s$. It
follows from the equality $a_{-1}=0$  that $a_{1}=a_3=\dots=0$,
which yields that $s$ is even. We obtain  $$
 (k+2)(k+2\lambda+1)a_{k+2}=(2k-M^2)a_k,\ k=0,2,4,\dots,s-2.
$$According to \eqref{2} we know   $p=cH_s^{\lambda}$.
The proof is completed.
\end{proof}

\section{Proofs of Theorems \ref{thm2}-\ref{thm1}}

The proofs rely on the following duality relation between the extremal
problem
\begin{equation}\label{3.1}
  M_{n}\left(L_2(W_\lambda),\sqrt{A}\,\frac{\rm d}{{\rm d}x}\right)=\sup_{0\neq p\in\mathcal{P}_n}\frac{\|\sqrt{A}\,p'\|_{L_2(W_\lambda)}}{\|p\|_{L_2(W_\lambda)}},\ \lambda>0,
\end{equation} and the integral equation system
\begin{equation}\label{3.2} \int_{I}\left\{A(x)p''+C(x)p'+\frac{2\lambda}xp'+M^2p\right\}q\,W_\lambda=0,\ {\rm for\ all}\ q\in\mathcal P_n,
\end{equation} with some $M>0$, where $W_\lambda(x)$, $A(x)$, and $C(x)$ are given as in Table \ref{t1}.

\begin{lem}\label{lem1}
Let  $n\ge2$, $w\in CW_e:=\{(1-x^2)^{\mu-\frac12}, e^{-x^2}\}$,
$\lambda>0$, $\mu>-1/2$, and $W_\lambda(x):=|x|^{2\lambda}w(x)$.
Denote $M_{n}(W_\lambda)$ by the supremum of all those $M>0$ for
which the integral equation system \eqref{3.2} has a nontrivial
solution $p\in\mathcal{P}_n$. We have
$$M_{n}(W_\lambda)=M_{n}\left(L_2(W_\lambda),\sqrt{A}\,\frac{\rm d}{{\rm
d}x}\right),$$where $A(x)=1-x^2$ if $w(x)=(1-x^2)^{\mu-\frac12}$, and
$A(x)=1$ if $w(x)=e^{-x^2}$.
\end{lem}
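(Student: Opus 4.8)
The plan is to identify the integral equation system \eqref{3.2} as the weak Euler--Lagrange form of the generalized eigenvalue problem attached to the Rayleigh quotient defining $M_{n}(L_2(W_\lambda),\sqrt{A}\,\frac{\rm d}{{\rm d}x})$. To this end I introduce on $\mathcal{P}_n$ the two bilinear forms
$$a(p,q):=\int_I A\,p'q'\,W_\lambda,\qquad b(p,q):=\int_I p\,q\,W_\lambda,$$
so that the quantity being maximized is $a(p,p)/b(p,p)$. Since $\dim\mathcal{P}_n=n+1<\infty$, $b$ is a genuine inner product, and $a$ is symmetric and positive semidefinite (as $A\ge0$), the supremum $\sup_{0\ne p}a(p,p)/b(p,p)$ is attained and equals the largest eigenvalue $\mu_{\max}$ of the generalized eigenvalue problem $a(p,q)=\mu\,b(p,q)$ for all $q\in\mathcal{P}_n$. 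Choosing any non-constant $p$ shows $\mu_{\max}>0$, hence $M_{n}(L_2(W_\lambda),\sqrt{A}\,\frac{\rm d}{{\rm d}x})=\sqrt{\mu_{\max}}$.

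The heart of the argument is the integration-by-parts identity
$$a(p,q)=\int_I A\,p'q'\,W_\lambda=-\int_I\Big(A\,p''+C\,p'+\tfrac{2\lambda}{x}\,p'\Big)\,q\,W_\lambda,\qquad p,q\in\mathcal{P}_n.$$
I would obtain it by writing $a(p,q)=\int_I (A W_\lambda p')\,q'$ and moving the derivative off $q'$, producing a boundary term $[A W_\lambda p'q]_{\partial I}$ and the interior term $-\int_I (AW_\lambda p')'\,q$. The boundary term vanishes because $AW_\lambda|_{\partial I}=0$ (recorded in Section \ref{sec2}). For the interior term I use the weight identity $(A W_\lambda)'=(C+\tfrac{2\lambda}{x})W_\lambda$: differentiating $W_\lambda=|x|^{2\lambda}w$ with $\frac{\rm d}{{\rm d}x}|x|^{2\lambda}=\frac{2\lambda}{x}|x|^{2\lambda}$ and invoking $(Aw)'=Bw$ from \eqref{2.1} reduces it to the elementary relation $B+\tfrac{2\lambda}{x}A=C+\tfrac{2\lambda}{x}$, which one checks directly in both rows of Table \ref{t1}. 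Expanding $(AW_\lambda p')'=(C+\tfrac{2\lambda}{x})W_\lambda p'+A W_\lambda p''$ then yields the identity.

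With this identity in hand the equivalence is immediate: the weak eigenvalue equation $a(p,q)=M^2 b(p,q)$ for all $q\in\mathcal{P}_n$ is, after moving everything to one side, precisely \eqref{3.2}. Consequently the set of $M>0$ for which \eqref{3.2} admits a nontrivial $p\in\mathcal{P}_n$ equals $\{\sqrt{\mu}:\mu>0\text{ a generalized eigenvalue}\}$, whose supremum is $\sqrt{\mu_{\max}}$. Comparing with the first paragraph gives $M_{n}(W_\lambda)=\sqrt{\mu_{\max}}=M_{n}(L_2(W_\lambda),\sqrt{A}\,\frac{\rm d}{{\rm d}x})$, as claimed.

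The main obstacle is the rigorous justification of the integration by parts across the singularity of $|x|^{2\lambda}$ at the origin. I would handle this by splitting $I$ at $0$ and integrating by parts separately on the two halves, where $|x|^{2\lambda}$ is smooth; the interior boundary terms at $0^{\pm}$ again have the form $A W_\lambda p'q$ and vanish since $|x|^{2\lambda}\to0$ as $x\to0$ for $\lambda>0$. The same hypothesis $\lambda>0$ ensures $\frac{2\lambda}{x}p'\,W_\lambda\sim|x|^{2\lambda-1}$ is integrable near $0$, so that all integrals are finite; this is precisely the point at which $\lambda>0$ enters.
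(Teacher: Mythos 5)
Your proof is correct and follows essentially the same route as the paper: the key step in both is the integration-by-parts identity $\int_I A\,p'q'\,W_\lambda=-\int_I\bigl(Ap''+Cp'+\tfrac{2\lambda}{x}p'\bigr)q\,W_\lambda$, which converts \eqref{3.2} into the weak form of the generalized eigenvalue problem for the Rayleigh quotient. The paper merely packages this as two separate inequalities (the variational condition $\varphi'(0)=0$ at an extremizer for one direction, and testing \eqref{3.2} against $q=p$ for the other) instead of citing the finite-dimensional spectral theorem, and it does not spell out the splitting of the integral at the origin that you rightly observe is needed to justify the integration by parts for $|x|^{2\lambda}$ with $\lambda>0$.
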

\begin{proof} For abbreviation, we set $$M_n\equiv M_{n}\left(L_2(W_\lambda),
\sqrt{A}\,\frac{\rm d}{{\rm d}x}\right).$$
First we prove  $M_{n}\le M_n(W_{\lambda})$ in two cases.

{Case 1:} $W_\lambda=w_{\lambda,\mu}$. Let $p^*\in\mathcal{P}_n$
be an extremal polynomial of
$$M_{n}:=\sup_{0\neq p\in\mathcal{P}_n}\frac{\|\sqrt{1-x^2}\,p'\|_{L_2(w_{\lambda,\mu})}}{\|p\|_{L_2(w_{\lambda,\mu})}}.$$ Then the function
$$\varphi(t):=\frac{\|\sqrt{1-x^2}(p^*+tq)'\|_{L_2(w_{\lambda,\mu})}^2}{\|p^*+tq\|_{L_2(w_{\lambda,\mu})}^2},\ q\in\mathcal{P}_n,$$
has a maximum at $t=0$, which leads to $\varphi'(0)=0$. It follows
that
\begin{equation}\label{3.3}
\int_{-1}^1(1-x^2)p^*{'}q'w_{\lambda,\mu}=M_{n}^2\int_{-1}^1p^*q\,w_{\lambda,\mu},\ {\rm for\ all}\ q\in\mathcal{P}_n.
\end{equation} Meanwhile, integrating by parts shows that for all $p,q\in\mathcal{P}_n$,
\begin{equation}\label{3.4}
   \int_{-1}^1(1-x^2)p'q'\,w_{\lambda,\mu}=\int_{-1}^1\left[(2\lambda+2\mu+1)xp'-(1-x^2)p{''}(x)-\frac{2\lambda}xp'\right]q\,w_{\lambda,\mu},
\end{equation}
which, together with \eqref{3.3}, yields that \eqref{3.2} has a
nontrivial  solution $p^*\in \mathcal P_n$ with $M_n$ for the case
$W_\lambda=w_{\lambda,\mu}$. Then $M_{n}\le M_n(W_{\lambda})$.

{Case 2:} $W_\lambda=w_\lambda$. The proof is similar to the first
case. Let $p^*\in\mathcal{P}_n$ be an extremal polynomial of
$$M_{n}:=\sup_{0\neq p\in\mathcal{P}_n}\frac{\|p'\|_{L_2(w_{\lambda})}}{\|p\|_{L_2(w_{\lambda})}}.$$Then  the function
$$\varphi(t):=\frac{\|(p^*+tq)'\|_{L_2(w_{\lambda})}^2}{\|p^*+tq\|_{L_2(w_{\lambda})}^2},\ q\in\mathcal{P}_n,$$
has a maximum at $t=0$, which leads to $\varphi'(0)=0$. It follows
that
\begin{equation}\label{3.5}
\int_{\mathbb R}p^*{'}q'w_{\lambda}=M_{n}^2\int_{\mathbb
R}p^*q\,w_{\lambda},\ {\rm for\ all}\ q\in\mathcal{P}_n.
\end{equation}Meanwhile, integrating by parts leads that
\begin{equation}\label{3.6}
   \int_{\mathbb R}p'q'\,w_{\lambda}=\int_{\mathbb R}\left[2xp'-p''-\frac{2\lambda}xp'\right]q\,w_{\lambda},\ {\rm for\ all}\
   p,q\in\mathcal{P}_n,
\end{equation}which, together with \eqref{3.5},    yields that \eqref{3.2} has a
nontrivial  solution $p^*\in \mathcal P_n$ with $M_n$ for the case
$W_\lambda=w_{\lambda}$. Then $M_{n}\le M_n(W_{\lambda})$.

Now we turn to prove $M_{n}\ge M_n(W_{\lambda})$. Assume that
$p\in\mathcal{P}_n$ is a nontrivial solution of the integral
equation system \eqref{3.2} with some $M>0$. It follows from
\eqref{3.4} and \eqref{3.6} that for all $q\in\mathcal{P}_n$,
$$\int_Ip'q'\,AW_{\lambda}=\int_I\left\{-A(x)p''-C(x)p'-\frac{2\lambda}xp'\right\}q\,W_{\lambda}.$$
Applying the above equality with this $p$ and $q=p$, we obtain
\begin{align*}\int_{I}\left|\sqrt{A}\, p'\right|^2\,W_{\lambda}=\int_{I}\left\{-A(x)p{''}-C(x)p'-\frac{2\lambda}xp'\right\}p\,W_{\lambda}=M^2\int_{I}|p|^2\,W_{\lambda},
\end{align*}which gives $M_{n}\ge M_n(W_{\lambda})$.

Therefore, $M_{n}= M_n(W_{\lambda})$.
This completes the proof.
\end{proof}

Now we give the proofs of Theorems \ref{thm2} and \ref{thm1}.

\begin{proof}[Proof of Theorem \ref{thm2}]
\

  (i) For the case of $n=1$, we observe that the extremal polynomial must be attained by $p^*(x)=c(x+a_0)\ (c\neq0)$. Then the straightforward calculation  gives
$$\|p^*{'}\|_{L_2(w_{\lambda})}^2=c^2\int_{\mathbb R}w_{\lambda}=c^2\Gamma\left(\lambda+\frac12\right),$$
and
$$\|p^*\|_{L_2(w_{\lambda})}^2=c^2\int_{\mathbb R} w_{\lambda+1}+c^2a_0^2\int_{\mathbb R}w_{\lambda}=c^2\Gamma\left(\lambda+\frac12\right)\cdot\left({\lambda+\frac12}+a_0^2\right),$$
which leads that
\begin{align*}
M_{1}\left(L_2(w_{\lambda}),\frac{\rm d}{{\rm d}x}\right)
&=\sup_{a_0\in\mathbb
R}\frac{\|p^*{'}\|_{L_2(w_{\lambda})}}{\|p^*\|_{L_2(w_{\lambda})}}
=\sqrt{\frac{2}{2\lambda+1}},
\end{align*}with the extremal polynomial $p^*(x)=cx,\, c\neq0$.

  (ii)
For the case of $n\ge2$, let $p^*\in\mathcal{P}_n$ be an extremal polynomial of
$$
 M_{n}\left(L_2(w_{\lambda}),\frac{\rm d}{{\rm d}x}\right):=\sup_{0\neq p\in\mathcal{P}_n}\frac{\|p'\|_{L_2(w_{\lambda})}}{\|p\|_{L_2(w_{\lambda})}},
$$and $M_n(w_{\lambda})$ be the supremum of all those $M>0$ for
which the integral equation system
$$
\int_{\mathbb R}q\,L[\,p\,]\,w_{\lambda}=0,\  {\rm for\ all}\ q\in\mathcal{P}_n,
$$
 has a nontrivial
solution $p\in\mathcal{P}_n$, where $$L[\,p\,]:=p''-2xp'+\frac{2\lambda}xp'+M^2p.$$
By Lemma \ref{lem1}, we have to determine the value of $M_n(w_{\lambda})$.

   Consider first the case of even $n\ge2$. By Proposition \ref{prop4-0} we know
$L[\,H_n^\lambda\,]=0$ with $M^2=2n$, which implies
$$\int_{\mathbb R}q\,L[\,H_n^\lambda\,]\,w_{\lambda}=0,\  {\rm for\ all}\ q\in\mathcal{P}_n.$$
This deduces $M_n(w_{\lambda})\ge \sqrt{2n}$. Combining with \eqref{1.5-0}, we have
\begin{equation}\label{3.7}
M_{n}\left(L_2(w_{\lambda}),\frac{\rm d}{{\rm d}x}\right)\ge \sqrt{2n}>\sqrt{2n-2}>M_{n-1}\left(L_2(w_{\lambda}),\frac{\rm d}{{\rm d}x}\right).
\end{equation}
This  yields that  any extremal polynomial $p^*$ must be even. In
fact, assume that $p^*\in\mathcal P_n$ is an extremal polynomial.
We set
$$p^*=p_e^*+p_o^*,\ \ p^*_e(x)=\frac{p^*(x)+p^*(-x)}2,\
p^*_o(x)=\frac{p^*(x)-p^*(-x)}2,$$ where $p_e^*\in\mathcal{P}_n$
and $p_o^*\in\mathcal{P}_{n-1}$ are the even and odd parts of
$p^*$, respectively.  It follows that
\[\|p^*{'}\|_{L_2(w_{\lambda})}^2=\|p_e^*{'}\|_{L_2(w_{\lambda})}^2+\|p_o^*{'}\|_{L_2(w_{\lambda})}^2\]
and
\[\|p^*\|_{L_2(w_{\lambda})}=\|p_e^*\|_{L_2(w_{\lambda})}^2+\|p_o^*\|_{L_2(w_{\lambda})}^2.\]
If $p^*$ is not even, then $0\neq p^*_o\in \mathcal P_{n-1}$. By
\eqref{3.7} we have
$$M_{n}^2\left(L_2(w_{\lambda}),\frac{\rm d}{{\rm d}x}\right)=\frac{\|p^*{'}\|_{L_2(w_{\lambda})}^2}{\|p^*\|_{L_2(w_{\lambda})}^2}>
M^2_{n-1}\left(L_2(w_{\lambda}),\frac{\rm d}{{\rm d}x}\right)\ge
\frac{\|p_o^*{'}\|_{L_2(w_{\lambda})}^2}{\|p_o^*\|_{L_2(w_{\lambda})}^2},$$
which gives
$$\frac{\|p_e^*{'}\|_{L_2(w_{\lambda})}^2}{\|p_e^*\|_{L_2(w_{\lambda})}^2}>\frac{\|p^*{'}\|_{L_2(w_{\lambda})}^2}{\|p^*\|_{L_2(w_{\lambda})}^2}.$$
This contradicts the assumption that $p^*$ is   extremal. Thus
$p^*$ must be an even polynomial. Meanwhile, since $p^*$ is
extremal and even, we get $L[\,p^*\,]\in \mathcal P_n$, and
\begin{equation}\label{3.8}
\int_{\mathbb R}q\,L[\,p^*\,]\,w_{\lambda}=0,\  {\rm for\ all}\ q\in\mathcal{P}_n,
\end{equation}
with $M_n(w_{\lambda})$, which deduces that $L[\,p^*\,]=0$ by
setting $q=L[\,p^*\,]$ in \eqref{3.8}. By Proposition
\ref{prop4-0} we get $M_n(w_{\lambda})\le\sqrt{2n}$. Therefore,
$M_n(w_{\lambda})= \sqrt{2n}$ with the extremal polynomial
$H_n^\lambda$.

 Next consider the case of odd $n\ge2$.   By $\lambda>0$ and \eqref{1.5-0}  we have
$$M_{n}\left(L_2(w_{\lambda}),\frac{\rm d}{{\rm d}x}\right)>\sqrt{2n-\frac{4\lambda}{1+2\lambda}}>
\sqrt{2n-2}=M_{n-1}\left(L_2(w_{\lambda}),\frac{\rm d}{{\rm
d}x}\right).$$Using the same method as in the case of even $n$, we
show that the extremal polynomial $p^*$ must be odd. Hence, we can
write
 $p^*(x)=\sum_{j=0}^ma_{2j+1}x^{2j+1}$ with $m=(n-1)/2$ and $a_{n}\neq0$. We derive from \eqref{3.8} that
\begin{equation}\label{3.9}
\sum_{j=0}^ma_{2j+1}\langle x^{2i+1},L[\,x^{2j+1}\,]\rangle_{w_{\lambda}}=0,\ i=0,\dots,m.
\end{equation}
By the straightforward calculation, we have, for all $j=0,\dots,m$,
$$L[\,x^{2j+1}\,]=(2j+1)(2j+2\lambda)x^{2j-1}+(M^2-4j-2)x^{2j+1}.$$
This leads to
$$\langle x^{2i+1},L[\,x^{2j+1}\,]\rangle_{w_{\lambda}}=(2j+1)(2j+2\lambda)d_{2i+2j}+(M^2-4j-2)d_{2i+2j+2},$$
where  $d_{2s}=\int_{\mathbb
R}x^{2s}w_{\lambda}=\Gamma(s+\lambda+1/2)$ for all $s\in\mathbb
N_0$. Define
$$F_{m+1}(t):=\det\left(\left\{(2j+1)(2j+2\lambda)d_{2i+2j}+(t-4j-2)d_{2i+2j+2}\right\}_{i,j=0}^m\right).$$
Using the Crammer rule, we derive from $p^*\neq0$ that
$F_{m+1}(M^2)=0$. Therefore,
$M_n(w_{\lambda,\mu})=\sqrt{\nu_{m+1}}$ with the extremal
polynomials determined by \eqref{3.9} with $M^2=\nu_{m+1}$, where
$\nu_{m+1}$ is the large positive root of the polynomial
$F_{m+1}$. It also holds for $n=1$.

 The proof of  Theorem \ref{thm2} is completed.
\end{proof}

\vskip3mm
\begin{proof}[Proof of Theorem \ref{thm1}]
\

(i) For the case of $n=1$, we observe that the extremal polynomial
must be attained by $p^*(x)=c(x+a_0)\ (c\neq0)$. Then the
straightforward calculation  gives
$$\|\sqrt{1-x^2}\,p^*{'}\|_{L_2(w_{\lambda,\mu})}^2=c^2\int_{-1}^1w_{\lambda,\mu+1}=c^2B\left(\lambda+\frac12,\mu+\frac32\right),$$
and
\begin{align*}\|p^*\|_{L_2(w_{\lambda,\mu})}^2&=c^2\int_{-1}^1w_{\lambda+1,\mu}+c^2a_0^2\int_{-1}^1w_{\lambda,\mu}
=B\left(\lambda+\frac12,\mu+\frac32\right)\cdot\left(\frac{2\lambda+1}{2\mu+1}+c^2\right),
\end{align*}where
$B(\az,\beta)=\frac{\Gamma(\az)\Gamma(\beta)}{\Gamma(\az+\beta)}$
for $\az,\beta>0$. This leads that
\begin{align*}
M_{1}\left(L_2(w_{\lambda,\mu}),(1-x^2)^{\frac12}\,\frac{\rm
d}{{\rm d}x}\right) &=\sup_{a_0\in\mathbb
R}\frac{\|\sqrt{1-x^2}\,p^*{'}\|_{L_2(w_{\lambda,\mu})}}{\|p^*\|_{L_2(w_{\lambda,\mu})}}
=\sqrt{\frac{2\mu+1}{2\lambda+1}},
\end{align*}with the extremal polynomial $p^*(x)=cx,\ c\neq0$.

(ii) For the case of $n\ge2$, let $p^*\in\mathcal{P}_n$ be an
extremal polynomial of
$$
 M_n\equiv M_{n}\left(L_2(w_{\lambda,\mu}),(1-x^2)^{\frac12}\,\frac{\rm d}{{\rm d}x}\right)
 :=\sup_{0\neq
 p\in\mathcal{P}_n}\frac{\|\sqrt{1-x^2}\,p'\|_{L_2(w_{\lambda,\mu})}}{\|p\|_{L_2(w_{\lambda,\mu})}}.
$$By Lemma \ref{lem1} we have
\begin{equation}\label{3.10}
\int_{-1}^1q\,L_n[\,p^*\,]\,w_{\lambda,\mu}=0,\  {\rm for\ all}\
q\in\mathcal{P}_n,
\end{equation}
  where $$L_n[\,p\,]:=(1-x^2)p''-(2\lambda+2\mu+1)xp'+\frac{2\lambda}xp'+M_n^2p.$$
We write $p^*=p_e^*+p_o^*$, where $p_e^*$ and $p_o^*$ are the even
and odd parts of $p^*$, respectively. Then we have
 $$L_n[\,p^*\,]=L_n[\,p_e^*\,]+L_n[\,p_o^*\,],$$ where $L_n[\,p_e^*\,]\in \mathcal{P}_n$ is even and $L_n[\,p_o^*\,]$ is odd. Taking $q=L_n[\,p_e^*\,]$ leads to
$$\int_{-1}^1|L_n[\,p_e^*\,]|^2\,w_{\lambda,\mu}=0,$$which gives $L_n[\,p_e^*\,]=0$. Thus we have
\begin{equation}\label{3.11}
\int_{-1}^1q\,L_n[\,p_o^*\,]\,w_{\lambda,\mu}=0,\ {\rm for\ all}\
q\in\mathcal{P}_n.
\end{equation}
If $p_o^*=0$, then $p_e^*\neq 0$. Applying Proposition
\ref{prop3-0}, we deduce from $L_n[\,p_e^*\,]=0$ that
$p_e^*=cC_{2\lfloor n/2\rfloor}^{(\mu,\lambda)} \ (c\neq0)$ with
$$M_n^2=\max_{2\le {\rm even}\ s\le n}s(s+2\lambda+2\mu)=4\lfloor  n/2 \rfloor (\lfloor  n/2 \rfloor +\lz+\mu),$$ where $\lfloor x\rfloor$ denotes the largest
integer not exceeding $x$. If $p_o^*\neq 0$, then
$p_o^*(x)=\sum_{j=0}^ma_{2j+1}x^{2j+1}$ satisfy \eqref{3.11},
where $m=\lfloor (n-1)/2 \rfloor $. Then
$$ \sum_{j=0}^ma_{2j+1}\langle x^{2i+1},L_n[\,x^{2j+1}\,]\rangle_{w_{\lambda,\mu}}=0,\ i=0,\dots,m.$$
Meanwhile, by the straightforward calculation, we have, for all $j=0,\dots,m$,
$$L_n[\,x^{2j+1}\,]=(2j+1)(2j+2\lambda)x^{2j-1}+[M_n^2-(2j+1)(2j+2\lambda+2\mu+1)]x^{2j+1}.$$
This leads to
$$\langle x^{2i+1},L_n[\,x^{2j+1}\,]\rangle_{w_{\lambda,\mu}}=(2j+1)(2j+2\lambda)c_{2i+2j}+[M_n^2-(2j+1)(2j+2\lambda+2\mu+1)]c_{2i+2j+2},$$
where
$c_{2s}=\int_{-1}^1x^{2s}w_{\lambda,\mu}=B(s+\lambda+1/2,\mu+1/2)$
for all $s\in\mathbb N_0$. Define
$$G_{m+1}(t):=\det\left(\left\{(2j+1)(2j+2\lambda)c_{2i+2j}+[t-(2j+1)(2j+2\lambda+2\mu+1)]c_{2i+2j+2}\right\}_{i,j=0}^m\right).$$
Using the Crammer rule, we derive that $G_{m+1}(M^2)\neq0$ if and
only if $a_{2j+1}=0$ for all $j=0,\dots, m$. Since $p_o^*\neq 0$,
we get $G_{m+1}(M_n^2)=0$.   By Lemma \ref{lem1} we obtain that
$M_n=\sqrt{\nu_{m+1}}$, $\nu_{m+1}$ is the large positive root of
the polynomial $G_{m+1}$.  Therefore,
$$M_n(w_{\lambda,\mu})=\Bigg\{\begin{aligned}&\max\left\{\sqrt{\nu_{\frac{n}2}},\sqrt{n(n+2\lambda+2\mu)}\right\},\ &&{\rm if}\ n\ {\rm is\ even},\\ &\max\left\{\sqrt{\nu_{\frac{n+1}2}},\sqrt{(n-1)(n+2\lambda+2\mu-1)}\right\},\ &&{\rm if}\ n\ {\rm is\ odd},
\end{aligned}.$$ Clearly, it also holds for $n=1$.

  The proof of Theorem \ref{thm1} is completed.
\end{proof}

\vskip3mm In the final part of this section, let us calculate some
exact values  of the above two extremal problems in some special
cases.
\begin{exam} Consider the extremal problem $M_n\equiv M_{n}(L_2(w_\lambda),\frac{\rm d}{{\rm d}x})$, $\lambda>0$.

  (i) When $n=1$, it has been given in the proof of Theorem \ref{thm2} that
  $$M_{1}=\sqrt{\frac{2}{2\lambda+1}}<M_2=2,$$ with the extremal polynomials $p^*(x)=cx,
  \,c\neq0$.

  (ii) When $n=3$, we have
  \begin{align*}
    F_2(t)&=\begin{vmatrix}
2\lambda d_0+(t-2)d_2 & 6(1+\lambda)d_2+(t-6)d_4 \\
2\lambda d_2+(t-2)d_4 & 6(1+\lambda)d_4+(t-6)d_6
\end{vmatrix}
\\&= d_0d_2\left[(\lambda+\frac12)(\lambda+\frac32)t^2-2(2\lambda+3)(\lambda+1)t-3\lambda+9\right].
  \end{align*}Then the largest positive zero of $F_2(t)$ is
  $$\nu_2=\frac{8\lambda^2+20\lambda+12+2\sqrt{16\lambda^4+292\lambda^3+232\lambda^2+57\lambda+9}}{(2\lambda+1)(2\lambda+3)}.$$
   Therefore, we obtain
   $$M_{3}
=\sqrt{\frac{8\lambda^2+20\lambda+12+2\sqrt{16\lambda^4+292\lambda^3+232\lambda^2+57\lambda+9}}{(2\lambda+1)(2\lambda+3)}}.$$

\end{exam}

\vskip3mm

\begin{exam} Consider the extremal problem $M_n\equiv M_{n}(L_2(w_{\lambda,\mu}),(1-x^2)^{\frac12}\frac{\rm d}{{\rm d}x})$, $\lambda>0$, $\mu>-1/2$.

(i) When $n=1$, it has been given in the proof of Theorem
\ref{thm1} that
$$M_{1}=\sqrt{\frac{2\mu+1}{2\lambda+1}},
$$with the extremal polynomial $p^*(x)=cx,\,c\neq 0$.

(ii) When $n=2$, we have
  $$G_1(t)=(t-2\lambda-2\mu-1)c_2+2\lambda c_0$$has the largest positive zero
  $$\nu_1=\frac{2\mu+1}{2\lambda+1}<2(2\lambda+2\mu+2),$$ which
  gives $$M_{2}=\sqrt{2(2\lambda+2\mu+2)}.$$

(iii) When $n=3$ or $n=4$, we have
  \begin{align*}
    G_2(t)&=\begin{vmatrix}
2\lambda c_0+[t-(2\lambda+2\mu+1)]c_2 & 6(1+\lambda)c_2+[t-3(2\lambda+2\mu+1)]c_4 \\
2\lambda c_2+[t-(2\lambda+2\mu+1)]c_4 & 6(1+\lambda)c_4+[t-3(2\lambda+2\mu+1)]c_6
\end{vmatrix}
\\&= c_0c_2\begin{vmatrix}
2\lambda +[t-(2\lambda+2\mu+1)]\frac{\lambda+1/2}{\lambda+\mu+1} & 6(1+\lambda)+[t-3(2\lambda+2\mu+1)]\frac{\lambda+3/2}{\lambda+\mu+2} \\
-\frac{4\lambda (2\mu+1)}{(2\lambda+2\mu+2)(2\lambda+2\mu+4)}& [t-3(2\lambda+2\mu+1)]\frac{\lambda+3/2}{\lambda+\mu+2}\frac{2(2\mu+1)}{(2\lambda+2\mu+4)(2\lambda+2\mu+6)}
\end{vmatrix}
  \end{align*}Then we  obtain the following results as stated in Table \ref{t2}. (Here, in Table \ref{t2}, the symbol ``$\times$'' means that $G_2(t)$ has no positive roots.)
\begin{table}[ht]
\centering\label{t2}
\caption{The largest positive zero of $G_2(t)$.}
\begin{tabular}{|c| c| c|c||c|c |c|c|}
  \hline
  $\lambda(\lambda=-\mu)$&$\nu_2$& $M_{3}$&$M_4$ & $\lambda(\lambda=\mu)$& $\nu_2$&$M_{3}$&$M_4$\\
  \hline
  $ 0.4$ &$7.7460$ &$2.7832$&$4$&$4$& $28.1733$&$ 6$&$4\sqrt{5}$\\
  \hline
  $ 0.3$ &$ 7.1730$&$2.6782$&$4$ &$3$& $19.7266$ & $2\sqrt{7}$&$8$ \\
  \hline
  $ 0.2$ &$6.4061$&$2.5310$&$4$ &$2$ & $9.0000$&$2\sqrt{5}$&$4\sqrt{3}$\\
  \hline
  $0.1$ &$ 5.2820$&$2.2983$&$4$&$1$ &$\times$&$2\sqrt{3}$&$4\sqrt{2}$\\
  \hline\hline
  $\lambda(\lambda=\mu+1)$&$\nu_2$& $M_{3}$&$M_4$ & $\lambda(\lambda=\mu+10)$& $\nu_2$&$M_{3}$&$M_4$\\
  \hline
  $ 100$ &$800.9852$ &$28.3017$ &$2\sqrt{402}$&$40$& $ 484.5768$&$22.0131$&$24$\\
  \hline
  $ 50$ &$  400.9707$&$ 20.0243$&$12\sqrt{7}$ &$30$&$438.3382$&$20.9365$ &$20.9365$   \\
  \hline
  $ 10$ &$80.8660$&$8.9926$ &$2\sqrt{42}$&$20$ &$403.0921$&$20.0772$& $20.0772$\\
  \hline
  $1$ &$ 8.0494$&$ 2.8371$ &$2\sqrt{6}$ &$10$ &$387.1007$&$19.6749$&$19.6749$\\
  \hline
\end{tabular}
\end{table}
The above results implies that  the extremal problem
$M_{n}(L_2(w_{\lambda,\mu}),(1-x^2)^{\frac12}\tfrac{\rm d}{{\rm
d}x})$ with $\lambda>0$ is very complicated.
\end{exam}

\section{Proofs of Theorems \ref{thm4}-\ref{thm3}}

The proofs are based on the duality relation between the
extremal problem
\[
  M_{n}\left(L_2(W_\lambda),\sqrt{A}\,\mathcal{D}_\lambda\right)=\sup_{0\neq p\in\mathcal{P}_n}\frac{\|\sqrt{A}\,\mathcal{D}_\lambda p\|_{L_2(W_\lambda)}}{\|p\|_{L_2(W_\lambda)}}
\] and the differential equation
\begin{equation}\label{3.12}
A(x)\mathcal{D}_\lambda^2p+B(x)\mathcal{D}_\lambda p+M^2p=0,\ p\in\mathcal{P}_n
\end{equation}with some $M>0$, where $W_\lambda(x)$, $A(x)$, and $B(x)$ are given as in Table \ref{t1}.
\begin{lem}\label{lem2}
Let $\lambda\ge0$, $\mu>-1/2$, $w\in CW_e:=\{(1-x^2)^{\mu-\frac12},
e^{-x^2}\}$, and $W_\lambda(x):=|x|^{2\lambda}w(x)$. Denote by
$\mathcal{M}_{n}(W_\lambda)$ the supremum of all those $M>0$ for
which the differential equation \eqref{3.12} has a nontrivial
solution $p\in\mathcal{P}_n$. Then for all $n\in\mathbb N$,
$$\mathcal{M}_{n}(W_\lambda)=M_{n}\left(L_2(W_\lambda),\sqrt{A}\,\mathcal{D}_\lambda\right),$$
where $A(x)=1-x^2$ if $w(x)=(1-x^2)^{\mu-\frac12}$, and
$A(x)=1$ if $w(x)=e^{-x^2}$.
\end{lem}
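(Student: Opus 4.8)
The plan is to reproduce, in the Dunkl setting, the duality argument of Lemma~\ref{lem1}, with ordinary integration by parts replaced by the self-adjointness of the operator $\mathcal{L}[p]:=A\,\mathcal{D}_\lambda^2 p+B\,\mathcal{D}_\lambda p$ against the measure $W_\lambda\,dx$. Write $M_n:=M_{n}(L_2(W_\lambda),\sqrt{A}\,\mathcal{D}_\lambda)$ and introduce the symmetric bilinear form $a(p,q):=\int_I A\,(\mathcal{D}_\lambda p)(\mathcal{D}_\lambda q)\,W_\lambda$, whose diagonal is $\|\sqrt{A}\,\mathcal{D}_\lambda p\|_{L_2(W_\lambda)}^2$. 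Everything hinges on the single identity
\begin{equation}\label{lem2key}
a(p,q)=-\int_I \mathcal{L}[p]\,q\,W_\lambda ,\qquad p,q\in\mathcal{P}_n .
\end{equation}

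To establish \eqref{lem2key} I would first exploit that $A$ and $w\in CW_e$ are \emph{even}. For any even factor $\Phi$ the modified Leibniz rule for the Dunkl operator collapses, the reflection correction dropping out because $\Phi(x)-\Phi(-x)=0$, to $\mathcal{D}_\lambda(\Phi u)=\Phi' u+\Phi\,\mathcal{D}_\lambda u$. Taking $\Phi=Aw$, $u=\mathcal{D}_\lambda p$ and using $(Aw)'=Bw$ from \eqref{2.1} yields the compact factorization $\mathcal{D}_\lambda\!\big(Aw\,\mathcal{D}_\lambda p\big)=w\,\mathcal{L}[p]$. Then, writing $a(p,q)=\int_I V\,(\mathcal{D}_\lambda q)\,|x|^{2\lambda}$ with $V:=Aw\,\mathcal{D}_\lambda p$, I would apply the skew-adjointness of $\mathcal{D}_\lambda$ against $|x|^{2\lambda}\,dx$, namely $\int_I V(\mathcal{D}_\lambda q)|x|^{2\lambda}=-\int_I (\mathcal{D}_\lambda V)\,q\,|x|^{2\lambda}$ up to boundary contributions, the reflection parts being bookkept by \eqref{2.2}--\eqref{2.3}. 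This converts $a(p,q)$ into $-\int_I (\mathcal{D}_\lambda V)\,q\,|x|^{2\lambda}=-\int_I w\,\mathcal{L}[p]\,q\,|x|^{2\lambda}=-\int_I \mathcal{L}[p]\,q\,W_\lambda$, which is \eqref{lem2key}.

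Granting \eqref{lem2key}, the two inequalities are immediate and parallel Lemma~\ref{lem1}. For $\mathcal{M}_{n}(W_\lambda)\le M_n$: given a nontrivial solution $p$ of \eqref{3.12} with parameter $M$, set $q=p$ in \eqref{lem2key} and substitute $\mathcal{L}[p]=-M^2 p$ to get $\|\sqrt{A}\,\mathcal{D}_\lambda p\|_{L_2(W_\lambda)}^2=M^2\|p\|_{L_2(W_\lambda)}^2$, so $M\le M_n$, and taking the supremum gives the bound. For $M_n\le\mathcal{M}_{n}(W_\lambda)$: since $\mathcal{P}_n$ is finite dimensional the Rayleigh quotient $a(p,p)/\|p\|_{L_2(W_\lambda)}^2$ attains its maximum at some $p^*\neq0$, and vanishing of its first variation gives $a(p^*,q)=M_n^2\langle p^*,q\rangle_{W_\lambda}$ for all $q\in\mathcal{P}_n$. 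Feeding this into \eqref{lem2key} yields $\langle \mathcal{L}[p^*]+M_n^2 p^*,\,q\rangle_{W_\lambda}=0$ for all $q\in\mathcal{P}_n$; since $\mathcal{D}_\lambda$ lowers degree by one and $\deg A\le2$, $\deg B=1$, one checks $\mathcal{L}[p^*]+M_n^2 p^*\in\mathcal{P}_n$, so choosing $q=\mathcal{L}[p^*]+M_n^2 p^*$ forces it to vanish. Thus $p^*$ solves \eqref{3.12} with $M=M_n$, giving $M_n\le\mathcal{M}_{n}(W_\lambda)$ and hence equality for every $n\in\mathbb{N}$.

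The main obstacle is identity \eqref{lem2key}. Because $\mathcal{D}_\lambda$ is nonlocal, integration by parts generates reflection terms $\sigma(\cdot)$ that, unlike in Lemma~\ref{lem1}, do not cancel termwise; these must be absorbed using \eqref{2.2}--\eqref{2.3}. Two verifications are delicate: that the boundary contributions $\big[q\,A\,(\mathcal{D}_\lambda p)\,W_\lambda\big]_{\partial I}$ vanish, which follows from $AW_\lambda|_{\partial I}=0$ recorded in Section~\ref{sec2} together with the Gaussian decay on $\mathbb{R}$ and the factor $(1-x^2)^{\mu+1/2}$ on $[-1,1]$; and that the singular factor $\tfrac{2\lambda}{x}$ produced by differentiating $|x|^{2\lambda}$ contributes no spurious mass at the origin, which is guaranteed for $\lambda\ge0$ by the vanishing (or continuity) of $|x|^{2\lambda}$ at $x=0$ and the integrability of $|x|^{2\lambda-1}$. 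Once \eqref{lem2key} is secured, the duality is formally identical to that of Lemma~\ref{lem1}.
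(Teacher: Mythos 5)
Your proposal is correct and follows the same overall duality skeleton as the paper: maximize the Rayleigh quotient, differentiate the perturbed quotient to get $a(p^*,q)=M_n^2\langle p^*,q\rangle_{W_\lambda}$, combine with the adjointness identity $a(p,q)=-\int_I\mathcal{L}[p]\,q\,W_\lambda$, and note $\mathcal{L}[p^*]+M_n^2p^*\in\mathcal{P}_n$ so that testing against itself forces it to vanish; the reverse inequality is the same substitution $q=p$ into the identity. Where you genuinely diverge is in how the central identity is proved. The paper expands $\mathcal{D}_\lambda q=q'+\lambda\sigma(q)$, splits $a(p,q)$ into three integrals $I_1+\lambda I_2+\lambda^2I_3$, integrates $I_1$ by parts in the classical sense, and then absorbs the resulting reflection terms one by one via \eqref{2.2}--\eqref{2.3} and the relation $\sigma^2=0$ --- a computation it carries out separately for $w_{\lambda,\mu}$ and $w_\lambda$. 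You instead observe that for an even multiplier $\Phi$ the Dunkl--Leibniz rule collapses to $\mathcal{D}_\lambda(\Phi u)=\Phi'u+\Phi\,\mathcal{D}_\lambda u$, so that \eqref{2.1} gives the factorization $\mathcal{D}_\lambda(Aw\,\mathcal{D}_\lambda p)=w\,\mathcal{L}[p]$, and then invoke the skew-symmetry of $\mathcal{D}_\lambda$ with respect to $|x|^{2\lambda}\,dx$ (boundary terms killed by $AW_\lambda|_{\partial I}=0$). This is more structural: it treats both weights in one stroke, makes transparent why the reflection terms must cancel, and isolates exactly the two analytic points that need checking (vanishing boundary terms and integrability of $|x|^{2\lambda-1}$ near the origin), both of which you address. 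The paper's computation buys explicitness --- the intermediate identities \eqref{3.16}, \eqref{3.23} are reused verbatim in the proof of Theorem \ref{thm5} --- whereas your route is shorter but would require a separate derivation of the skew-adjointness of $\mathcal{D}_\lambda$ (a standard fact, and one whose proof is essentially the content of \eqref{2.2}--\eqref{2.3}).
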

\begin{proof}
First we prove that $$\mathcal{M}_{n}(W_\lambda)\ge M_{n}\equiv
M_{n}\left(L_2(W_\lambda),\sqrt{A}\,\mathcal{D}_\lambda\right)$$
in two cases.

{ Case 1:} $W_\lambda=w_{\lambda,\mu}$. In this case,
it reduces to prove that
$$\mathcal{M}_{n}(w_{\lambda,\mu})\ge M_n\equiv M_{n}\left(L_2(w_{\lambda,\mu}),(1-x^2)^{\frac12}\,\mathcal{D}_\lambda\right),$$
where $\mathcal{M}_{n}(w_{\lambda,\mu})$ denotes by the supremum
of all those $M>0$ for which the differential equation
\[
(1-x^2)\mathcal{D}_\lambda^2p-(2\mu+1) x\mathcal{D}_\lambda p+M^2p=0
\]has a nontrivial solution $p\in\mathcal{P}_n$, and
\begin{equation}\label{3.13}
M_{n}=\sup_{0\neq
p\in\mathcal{P}_n}\frac{\|\sqrt{1-x^2}\,\mathcal{D}_\lambda
p\|_{L_2(w_{\lambda,\mu})}}{\|p\|_{L_2(w_{\lambda,\mu})}}.
\end{equation}
For our purpose, let $p^*\in\mathcal{P}_n$ be an  extremal
polynomial of \eqref{3.13}. It follows that the function
$$\varphi(t):=\frac{\|\sqrt{1-x^2}\,\mathcal{D}_\lambda(p^*+tq)\|_{L_2(w_{\lambda,\mu})}^2}{\|p^*+tq\|_{L_2(w_{\lambda,\mu})}^2},\ q\in\mathcal{P}_n,$$
has a maximum at $t=0$. This leads to $\varphi'(0)=0$, which follows
\begin{equation}\label{3.14}
\int_{-1}^1(1-x^2)\mathcal{D}_\lambda p{^*}\mathcal{D}_\lambda
q\,w_{\lambda,\mu}=M_n^2\int_{-1}^1p^*q\,w_{\lambda,\mu},\ {\rm for\ all}\
q\in\mathcal{P}_n.
\end{equation}
We shall calculate $\int_{-1}^1(1-x^2)\mathcal{D}_\lambda p\,\mathcal{D}_\lambda q\,w_{\lambda,\mu}$ for any $p,q\in\mathcal{P}_n$.
By \eqref{1.8} we have
\begin{align}\label{3.15}
 &\int_{-1}^1(1-x^2)\mathcal{D}_\lambda p\,\mathcal{D}_\lambda q\,w_{\lambda,\mu}\notag
 \\=&\int_{-1}^1(1-x^2)q'\mathcal{D}_\lambda p\,w_{\lambda,\mu}
+\lambda\int_{-1}^1(1-x^2)\sigma(q)\,\mathcal{D}_\lambda p\,w_{\lambda,\mu}\notag
\\=&\int_{-1}^1(1-x^2)q'\mathcal{D}_\lambda p\,w_{\lambda,\mu}
+\lambda\int_{-1}^1(1-x^2)p'\sigma(q)\,w_{\lambda,\mu}\notag
+\lambda^2\int_{-1}^1(1-x^2)\sigma(p)\,\sigma(q)\,w_{\lambda,\mu}\notag
\\=&:I_1+\lambda I_2+\lambda^2 I_3.
\end{align}
For the integral $I_1$, by integrating by part, $(1-x^2)w_{\lambda,\mu}|_{-1}^1=0$, \eqref{1.8}, and $\sigma^2=0$, we have
\begin{align*}
 I_1&=(1-x^2)q\mathcal{D}_\lambda p\,w_{\lambda,\mu}\Big|_{-1}^1
 -\int_{-1}^1q\left[(1-x^2)\mathcal{D}_\lambda p\,w_{\lambda,\mu}\right]'
 \\&=-\int_{-1}^1q\left[(1-x^2)(\mathcal{D}_\lambda p)'-(2\mu+1)x\mathcal{D}_\lambda p+\frac{2\lambda}x(1-x^2)\mathcal{D}_\lambda p\right]w_{\lambda,\mu}
  \\&=-\int_{-1}^1q\left[(1-x^2)\mathcal{D}_\lambda^2p-\lambda(1-x^2)\sigma(\mathcal{D}_\lambda p)-(2\mu+1)x\mathcal{D}_\lambda p+\frac{2\lambda}x(1-x^2)\mathcal{D}_\lambda p\right]w_{\lambda,\mu}
  \\&=\int_{-1}^1q\left[(2\mu+1)x\mathcal{D}_\lambda p-(1-x^2)\mathcal{D}_\lambda^2p+\lambda(1-x^2)\sigma(\mathcal{D}_\lambda p)-\frac{2\lambda}x(1-x^2)\mathcal{D}_\lambda p\right]w_{\lambda,\mu}
  \\&=\int_{-1}^1q\left[(2\mu+1)x\mathcal{D}_\lambda p-(1-x^2)\mathcal{D}_\lambda^2p\right]\,w_{\lambda,\mu}+\lambda\int_{-1}^1q\left[\sigma(p')-2\frac{p'}x \right](1-x^2)\,w_{\lambda,\mu}
\\&\quad-2\lambda^2\int_{-1}^1q\frac{\sigma(p)}x(1-x^2)\,w_{\lambda,\mu}
  \\&=\int_{-1}^1q\left[(2\mu+1)x\mathcal{D}_\lambda p-(1-x^2)\mathcal{D}_\lambda^2p\right]\,w_{\lambda,\mu}-\lambda\int_{-1}^1q\frac{p'(x)+p'(-x)}x(1-x^2)\,w_{\lambda,\mu}
\\&\quad-2\lambda^2\int_{-1}^1q\frac{\sigma(p)}x(1-x^2)\,w_{\lambda,\mu}.
\end{align*}It follows from \eqref{2.2} and \eqref{2.3} that
\begin{align*}
 \int_{-1}^1q\frac{p'(x)+p'(-x)}x(1-x^2)\,w_{\lambda,\mu}=\int_{-1}^1p'\,\sigma(q)(1-x^2)\,w_{\lambda,\mu}=I_2,
\end{align*}
and
\begin{align*}
2\int_{-1}^1q\frac{\sigma(p)}x(1-x^2)\,w_{\lambda,\mu}=\int_{-1}^1\sigma(p)\sigma(q)(1-x^2)\,w_{\lambda,\mu}=I_3,
\end{align*}which leads to
\begin{align}\label{3.16}
   I_1&=\int_{-1}^1q\left[(2\mu+1)x\mathcal{D}_\lambda p-(1-x^2)\mathcal{D}_\lambda^2p\right]\,w_{\lambda,\mu}-\lambda I_2- \lambda^2I_3.
\end{align}
From \eqref{3.15} and \eqref{3.16} we have
\begin{align}\label{3.17}
  \int_{-1}^1(1-x^2)\mathcal{D}_\lambda p\,\mathcal{D}_\lambda q\,w_{\lambda,\mu}=\int_{-1}^1q\left[(2\mu+1)x\mathcal{D}_\lambda p-(1-x^2)\mathcal{D}_\lambda^2p\right]w_{\lambda,\mu},\ {\rm for\ all}\ p,q\in\mathcal{P}_n.
\end{align}
Then combining \eqref{3.14} with \eqref{3.17} for $p=p^*$, we obtain
$$\int_{-1}^1q\left[(1-x^2)\mathcal{D}_\lambda^2 p^*-(2\mu+1)x\mathcal{D}_\lambda p^*+M_n^2p^*\right]w_{\lambda,\mu}=0,\ {\rm for\ all}\ q\in\mathcal{P}_n.$$
Since $$(1-x^2)\mathcal{D}_\lambda^2
p^*-(2\mu+1)x\mathcal{D}_\lambda p^*+M_n^2p^*\in\mathcal{P}_n,$$
by the orthogonality of polynomials it immediately implies
\begin{equation*}
(1-x^2)\mathcal{D}_\lambda^2 p^*-(2\mu+1)x\mathcal{D}_\lambda
p^*+M_n^2p^*=0,
\end{equation*}which gives $ \mathcal{M}_{n}(w_{\lambda,\mu})\ge M_n$.

{Case 2:} $W_\lambda=w_\lambda$. The proof is similar to the first
case.  In this case, it reduces to prove
$$\mathcal{M}_{n}(w_{\lambda})\ge M_n \equiv  M_{n}\left(L_2(w_{\lambda}),\mathcal{D}_\lambda\right),$$
where $\mathcal{M}_{n}(w_{\lambda})$ denotes by the supremum of
all those $M>0$ for which the differential equation
\begin{equation*}
\mathcal{D}_\lambda^2p-2x\mathcal{D}_\lambda p+M^2p=0\end{equation*}has a nontrivial solution $p\in\mathcal{P}_n$, and
\begin{equation}\label{3.18}
M_{n}=\sup_{0\neq
p\in\mathcal{P}_n}\frac{\|\mathcal{D}_\lambda
p\|_{L_2(w_{\lambda})}}{\|p\|_{L_2(w_{\lambda})}}.
\end{equation} Let $p^*\in\mathcal{P}_n$ be an extremal polynomial of \eqref{3.18}.
It follows that  the function
$$\varphi(t):=\frac{\|\mathcal{D}_\lambda(p^*+tq)\|^2}{\|(p^*+tq)\|^2},\ q\in\mathcal{P}_n,$$
has a maximum at $t=0$. This leads to $\varphi'(0)=0$, which
follows that
\begin{equation}\label{3.19}
\int_{\Bbb R}\mathcal{D}_\lambda p{^*}\mathcal{D}_\lambda
q\,w_{\lambda}=M_{n}^2\int_{\Bbb R}p^*q\,w_{\lambda}.
\end{equation}

We shall calculate $\int_{\Bbb R}\mathcal{D}_\lambda
p\,\mathcal{D}_\lambda q\,w_{\lambda}$ for any
$p,q\in\mathcal{P}_n$. By \eqref{1.8} we have
\begin{align}\label{4.8}
 \int_{\Bbb R}\mathcal{D}_\lambda p\,\mathcal{D}_\lambda q\,w_{\lambda}
 &=\int_{\Bbb R}q'\mathcal{D}_\lambda p\,w_{\lambda}
+\lambda\int_{\Bbb R}\sigma(q)\,\mathcal{D}_\lambda
p\,w_{\lambda}\notag
\\&=\int_{\Bbb R}q'\mathcal{D}_\lambda p\,w_{\lambda}
+\lambda\int_{\Bbb R}p'\sigma(q)\,w_{\lambda} +\lambda^2\int_{\Bbb
R}\sigma(p)\,\sigma(q)\,w_{\lambda}\notag
\\&=:I_1+\lambda I_2+\lambda^2 I_3.
\end{align}
For the integral $I_1$, by integrating by part,
$w_{\lambda}|_{-\infty}^\infty=0$, \eqref{1.8}, and $\sigma^2=0$,
we have
\begin{align*}
 I_1&=q\mathcal{D}_\lambda p\,w_{\lambda}\Big|_{-\infty}^\infty
 -\int_{\Bbb R}q\left(\mathcal{D}_\lambda p\,w_{\lambda}\right)'
 \\&=-\int_{\Bbb R}q\left[(\mathcal{D}_\lambda p)'-2x\mathcal{D}_\lambda p+\frac{2\lambda}x\mathcal{D}_\lambda p\right]w_{\lambda}
  \\&=-\int_{\Bbb R}q\left[\mathcal{D}_\lambda^2p-\lambda\sigma(\mathcal{D}_\lambda p)-2x\mathcal{D}_\lambda p+\frac{2\lambda}x\mathcal{D}_\lambda p\right]w_{\lambda}
  \\&=\int_{\Bbb R}q\left[2x\mathcal{D}_\lambda p-\mathcal{D}_\lambda^2p\right]\,w_{\lambda}-\int_{\Bbb R}q\left[-\lambda\sigma(\mathcal{D}_\lambda p)+\frac{2\lambda}x\mathcal{D}_\lambda p\right]w_{\lambda}
  \\&=\int_{\Bbb R}q\left[2x\mathcal{D}_\lambda p-\mathcal{D}_\lambda^2p\right]\,w_{\lambda}+\lambda\int_{\Bbb R}q\left[\sigma(p')-2\frac{p'}x \right]w_{\lambda}-2\lambda^2\int_{\Bbb R}q\frac{\sigma(p)}xw_{\lambda}
  \\&=\int_{\Bbb R}q\left[2x\mathcal{D}_\lambda p-\mathcal{D}_\lambda^2p\right]\,w_{\lambda}-\lambda\int_{\Bbb R}q\frac{p'(x)+p'(-x)}x\,w_{\lambda}-2\lambda^2\int_{\Bbb R}q\frac{\sigma(p)}xw_{\lambda}.
\end{align*}It follows from \eqref{2.2} and \eqref{2.3} that
\begin{align*}
 \int_{\Bbb R}q\frac{p'(x)+p'(-x)}x\,w_{\lambda}=\int_{\Bbb R}p'\,\sigma(q)\,w_{\lambda}=I_2,
\end{align*}
and
\begin{align*}
2\int_{\Bbb R}q\frac{\sigma(p)}x\,w_{\lambda}=\int_{\Bbb
R}\sigma(p)\sigma(q)\,w_{\lambda}=I_3,
\end{align*}which leads to
\begin{align}\label{4.9}
   I_1&=\int_{\Bbb R}q\left[2x\mathcal{D}_\lambda p-\mathcal{D}_\lambda^2p\right]w_{\lambda}-\lambda I_2- \lambda^2I_3.
\end{align}
From \eqref{4.8} and \eqref{4.9} we have
\begin{align}\label{3.20}
  \int_{\Bbb R}\mathcal{D}_\lambda p\,\mathcal{D}_\lambda q\,w_{\lambda}=\int_{\Bbb R}q\left[2x\mathcal{D}_\lambda p-\mathcal{D}_\lambda^2p\right]w_{\lambda},\ \forall\, p,q\in\mathcal{P}_n.
\end{align}

Combining with \eqref{3.19} and \eqref{3.20} for $p=p^*$, we obtain
$$\int_{\mathbb R}\left[\mathcal{D}_\lambda p^*-2x\mathcal{D}_\lambda p^*+M_n^2p^*\right]q\,w_{\lambda}=0,\  {\rm for\ all}\ q\in\mathcal{P}_n.$$By the orthogonality of polynomials it immediately leads to
\begin{equation*}
\mathcal{D}_\lambda^2 p^*-2x\mathcal{D}_\lambda p^*+M_n^2p^*=0,
\end{equation*}which gives $\mathcal{M}_{n}(w_{\lambda})\ge M_n$.

Next we  prove  $\mathcal{M}_{n}(W_{\lambda})\le M_n$. Assume that
$p\in\mathcal{P}_n$ is a nontrivial solution of the differential
equation \eqref{3.12} with some $M>0$. By \eqref{3.17} and
\eqref{3.20} we have, for all $q\in\mathcal{P}_n$,
$$\int_{I}\mathcal{D}_\lambda p\,\mathcal{D}_\lambda q\,AW_{\lambda}=
\int_{I}\left[-B(x)\mathcal{D}_\lambda
p-A(x)\mathcal{D}_\lambda^2p\right]q\,W_{\lambda}.$$Applying the
above equality with this $p$ and $q=p$, we obtain
\begin{align*}
  \quad\int_{I}|\mathcal{D}_\lambda p|^2\,AW_{\lambda}
  &=\int_I\left[-B(x)\mathcal{D}_\lambda p-A(x)\mathcal{D}_\lambda^2 p\right]p\,W_{\lambda}
  =M^2\int_I|p|^2\,W_{\lambda},
\end{align*}which implies that $\mathcal{M}_{n}(W_{\lambda})\le M_n$.
The proof of Lemma \ref{lem2} is completed.
\end{proof}

\vskip3mm
Now we turn to prove Theorems \ref{thm4} and \ref{thm3}.

\begin{proof}[Proof of Theorem \ref{thm4}]

\

By  Lemma \ref{lem2},  Proposition \ref{prop2}, and \eqref{2.8}
 we obtain
\begin{align*}M_{n}^2(L_2(w_\lambda),\mathcal{D}_\lambda)&=\left\{\begin{aligned}&\max\{2n,2(n+2\lambda-1)\},&&{\rm if}\ n\ {\rm is\ even},
  \\&\max\{2(n+2\lambda),2(n-1)\},&&{\rm if}\ n\ {\rm is\ odd},\end{aligned}\right.
 \\&=\left\{\begin{aligned}&2n,&&{\rm if}\ n\ {\rm is\ even\ and}\ 0\le\lambda\le1/2,
 \\&2(n+2\lambda-1),&&{\rm if}\ n\ {\rm is\ even\ and}\ \lambda>1/2,
  \\&2(n+2\lambda),&&{\rm if}\ n\ {\rm is\ odd}.\end{aligned}\right.
\end{align*}Then  applying Proposition \ref{prop2}, we have the following results stated in Table \ref{t3} with $c\neq0$.
\begin{table}[!htbp]
\centering \caption{Extremal polynomials with
$\lambda\ge0$.}\label{t3}
\begin{tabular}{|c| c| c| c|}
  \hline
  $n$& $\lambda$ &$M_{n}^2(L_2(w_\lambda),\mathcal{D}_\lambda)$& ${\rm extremal\ polynomials}$\\
  \hline
  ${\rm even}$& $[0,1/2]$ & $2n$ & $cH_n^\lambda$  \\
  $ $& $(1/2,+\infty)$ & $2(n+2\lambda-1)$ & $cH_{n-1}^\lambda$\\
  \hline
   ${\rm odd}$& $[0,+\infty)$ &$2(n+2\lambda)$& $cH_n^\lambda$\\
  \hline
\end{tabular}
\end{table}

This finishes the proof of Theorem \ref{thm4}.
\end{proof}
\vskip3mm
\begin{proof}[Proof of Theorem \ref{thm3}]

\

For $n\ge3$ being odd, it follows from \eqref{2.5} that
\begin{align*}
  \lambda_n^2-\lambda_{n-1}^2
  &=n(n+2\lambda+2\mu)+4\lambda\mu -(n-1)(n-1+2\lambda+2\mu)
  \\&=2n+2\lambda+2\mu+4\lambda\mu-1
  \\&=2n+(2\lambda+1)(2\mu+1)-2\ge4,
\end{align*}
which, by Lemma \ref{lem2} and Proposition \ref{prop1}, leads to
$$M_{n}^2\left(L_2(w_{\lambda,\mu}),(1-x^2)^{\frac12}\mathcal{D}_\lambda\right)=\sqrt{n(n+2\lambda+2\mu)+4\lambda\mu }.$$
It also holds for $n=1$.

 For $n\ge 2$ being even, it follows from
\eqref{2.5} that
\begin{align*}
  \lambda_n^2-\lambda_{n-1}^2
  &=n(n+2\lambda+2\mu) -(n-1)(n-1+2\lambda+2\mu)-4\lambda\mu
  \\&=2n+2\lambda+2\mu-4\lambda\mu-1=2n-(2\lambda-1)(2\mu-1).
\end{align*}
Then by Lemma \ref{lem2} and Proposition \ref{prop1} , if
$(2\lambda-1)(2\mu-1)\le4$, then $\lambda_n^2-\lambda_{n-1}^2\ge
0$, and
$$M_{n}^2\left(L_2(w_{\lambda,\mu}),(1-x^2)^{\frac12}\mathcal{D}_\lambda\right)=n(n+2\lambda+2\mu);$$
if $(2\lambda-1)(2\mu-1)>4$ then
\begin{align*}M_{n}^2\left(L_2(w_{\lambda,\mu}),(1-x^2)^{\frac12}\mathcal{D}_\lambda\right)&=\left\{\begin{aligned}&n(n+2\lambda+2\mu),&&{\rm if}\ n\ge n_0,
  \\&(n-1)(n-1+2\lambda+2\mu)+4\lambda\mu,&&{\rm if}\ n< n_0,\end{aligned}\right.
\\&=\left\{\begin{aligned}&n(n+2\lambda+2\mu),&&{\rm if}\ n\ge n_0,
  \\&n(n+2\lambda+2\mu)+2(n_0-n),&&{\rm if}\ n< n_0,\end{aligned}\right.
\end{align*}where $n_0:=(\lambda-1/2)(2\mu-1)$.
Combining with Proposition \ref{prop1} we have the following
results stated in Table \ref{t4} with $c\neq0$.

\begin{table}[!h]
\centering \caption{Extremal polynomials with $\lambda\ge0$ and
$\mu>-1/2$.}\label{t4}
\begin{tabular}{|c| c| c |c|}
  \hline
  $n$&$\lambda,\mu$ & $M_{n}^2\left(L_2(w_{\lambda,\mu}),(1-x^2)^{\frac12}\mathcal{D}_\lambda\right)$& ${\rm extremal\ polynomials}$\\
  \hline
  $ {\rm even}$ &$(2\lambda-1)(2\mu-1)>4$ &$n(n+2\lambda+2\mu)\ (n\ge n_0)$ & $cC_n^{(\mu,\lambda)}$  \\

  $ $ &$ $ &$n(n+2\lambda+2\mu)+2(n_0-n)\ (n<n_0)$ & $cC_{n-1}^{(\mu,\lambda)} $  \\

  $ $ &$(2\lambda-1)(2\mu-1)\le4$ &$n(n+2\lambda+2\mu)$ & $cC_n^{(\mu,\lambda)}$  \\
  \hline
  ${\rm odd}$ &$ $&$n(n+2\lambda+2\mu)+4\lambda\mu$& $cC_n^{(\mu,\lambda)}$\\
  \hline
\end{tabular}
\end{table}

This finishes the proof of of Theorem \ref{thm3}.
\end{proof}

\section{Proof of Theorem \ref{thm5}}

\begin{proof}[Proof of Theorem \ref{thm5}]

\

 For our purpose, it suffices to show that the inequality
\begin{align}\label{3.21}\notag
  &\quad(2\lambda_n^2+B'(0))\|\sqrt{A}\,\mathcal{D}_\lambda
  p\|_{L_2(W_\lambda)}^2+ 2\lambda B'(0)\langle Ap',p'(-\cdot)\rangle_{W_\lambda}
  \\&\le 2\lambda^3B'(0)\|\sqrt{A}\,\sigma(p)\|_{L_2(W_\lambda)}^2
  + \lambda_n^4\|p\|_{L_2(W_\lambda)}^2+\|A\,\mathcal{D}_\lambda^2p\|_{L_2(W_\lambda)}^2,
\end{align}
holds for all $p\in\mathcal{P}_n$, where $W_\lambda(x)$, $A(x)$, and $B(x)$ are given in the Table \ref{t1}.

Now we prove \eqref{3.21}. For simplicity, we denote $\|\cdot\|\equiv \|\cdot\|_{L_2(W_\lambda)}$ and $\langle\cdot,\cdot\rangle\equiv\langle\cdot,\cdot\rangle_{W_\lambda}$.
For any $p\in\mathcal{P}_n$, let
$$\mathcal{L}_n[\,p\,]:=A(x)\mathcal{D}_\lambda^2p+B(x)\mathcal{D}_\lambda p+\lambda_n^2p.$$
Then we obtain
\begin{align}\label{3.22}
  \|\mathcal{L}_n[\,p\,]\|^2
  =&\|B\,\mathcal{D}_\lambda p\|^2+2\langle A\,\mathcal{D}_\lambda^2 p,B\,\mathcal{D}_\lambda p\rangle
  \\&+2\lambda_n^2[\langle A\,\mathcal{D}_\lambda^2 p,p\rangle+\langle B\,\mathcal{D}_\lambda p,p\rangle]+\lambda_n^4\|p\|^2+\|A\,\mathcal{D}_\lambda^2p\|^2
  .\notag
\end{align}
On the one hand, by \eqref{2.1}, \eqref{1.8}, $B(x)=B'(0)x$,
$AW_{\lambda}|_{\partial I}=0$, and $\sigma^2=0$, we have
\begin{align*}
  &\|B\mathcal{D}_\lambda p\|^2
  =\int_IB(\mathcal{D}_\lambda p)^2|x|^{2\lambda}(Aw)'
  \\=&\,B(\mathcal{D}_\lambda p)^2AW_\lambda\Big|_{\partial I}-B'(0)\int_I(\mathcal{D}_\lambda p)^2AW_\lambda
  -2\int_I(\mathcal{D}_\lambda p)'(\mathcal{D}_\lambda p)ABW_\lambda-2\lambda\int_I\frac{(\mathcal{D}_\lambda p)^2}xABW_\lambda
  \\=&-B'(0)\|\sqrt{A}\,\mathcal{D}_\lambda p\|^2
  -2\int_I(\mathcal{D}_\lambda^2p)(\mathcal{D}_\lambda p)ABW_\lambda
  +2\lambda\int_I\sigma(\mathcal{D}_\lambda p)(\mathcal{D}_\lambda p)ABW_\lambda-2\lambda B'(0)\int_I(\mathcal{D}_\lambda p)^2AW_\lambda
  \\=&-B'(0)\|\sqrt{A}\,\mathcal{D}_\lambda p\|^2
  -2\langle A\,\mathcal{D}_\lambda^2 p,B\,\mathcal{D}_\lambda p\rangle
  +2\lambda\int_I\sigma(p')(\mathcal{D}_\lambda p)ABW_\lambda-2\lambda B'(0)\|\sqrt{A}\mathcal{D}_\lambda p\|^2.
\end{align*}
By \eqref{2.2}  we get
\begin{align*}
  &\int_I\sigma(p')(\mathcal{D}_\lambda p)ABW_\lambda
  \\=&\int_Ip'\sigma(p')ABW_\lambda+\lambda\int_I\sigma(p')\sigma( p)ABW_\lambda
   \\=&\int_I\frac{(p')^2}xABW_\lambda-\int_I p'\frac{p'(-\cdot)}xABW_\lambda
   +2\lambda\int_Ip'\frac{\sigma(p)}xABW_\lambda
   \\=&B'(0)\int_I(\mathcal D_\lz p-\lz \sigma(p))^2AW_\lambda-B'(0)\int_Ip'p'(-\cdot)AW_\lambda+2\lambda B'(0)\int_Ip'\sigma( p)AW_\lambda
   \\=&B'(0)\|\sqrt{A}\mathcal{D}_\lambda p\|^2+\lambda^2B'(0)\|\sqrt{A}\,\sigma(p)\|^2-B'(0)\langle Ap',p'(-\cdot)\rangle.
\end{align*}
Then we derive
\begin{align}\label{3.23}
  2\langle A\,\mathcal{D}_\lambda^2 p,B\,\mathcal{D}_\lambda p\rangle+\|B\mathcal{D}_\lambda p\|^2
  =&-B'(0)\|\sqrt{A}\,\mathcal{D}_\lambda p\|^2+2\lambda^3B'(0)\|\sqrt{A}\,\sigma(p)\|^2
\\&-2\lambda B'(0)\langle Ap',p'(-\cdot)\rangle.\notag
\end{align}
On the other hand, by \eqref{1.8} and $\sigma^2=0$ we obtain
\begin{align*}
  \langle A\,\mathcal{D}_\lambda^2 p, p\rangle
  &=\int_Ip(\mathcal{D}_\lambda p)'\,A W_\lambda
  +\lambda\int_Ip\sigma(\mathcal{D}_\lambda p)AW_\lambda
  \\&=\int_Ip(\mathcal{D}_\lambda p)'AW_\lambda
  +\lambda\int_Ip\sigma(p')\,AW_\lambda.
\end{align*}
Integrating by part, $AW_{\lambda}|_{\partial I}=0$, \eqref{1.8},
and \eqref{2.1} imply
\begin{align*}
  &\int_Ip\,(\mathcal{D}_\lambda p)' AW_\lambda
  \\=&\ p\mathcal{D}_\lambda p\, W_\lambda\Big|_{\partial I}
  -\int_I\mathcal{D}_\lambda p\left(pAW_\lambda\right)'
 \\=&-\int_Ip'\mathcal{D}_\lambda p\,AW_\lambda
 -\int_Ip\,\mathcal{D}_\lambda p(Aw)'|x|^{2\lambda}
 -2\lambda\int_Ip\frac{\mathcal{D}_\lambda p}xAW_\lambda
  \\=&-\int_I(\mathcal{D}_\lambda p)^2\,AW_\lambda+\lambda\int_I\sigma(p)\,\mathcal{D}_\lambda p\,AW_\lambda
  -\int_Ip\,\mathcal{D}_\lambda p\,BW_\lambda-2\lambda\int_Ip\frac{\mathcal{D}_\lambda p}xAW_\lambda
  \\=&-\|\sqrt{A}\,\mathcal{D}_\lambda p\|^2+\lambda\int_Ip'\sigma(p)\,AW_\lambda+\lambda^2\int_I\sigma(p)^2\,AW_\lambda-\langle B\mathcal{D}_\lambda p,p\rangle
  \\&-2\lambda\int_I p'\frac{p}xAW_\lambda
-2\lambda^2\int_I p\frac{\sigma(p)}xAW_\lambda
  \\=&-\|\sqrt{A}\,\mathcal{D}_\lambda p\|^2+\lambda\int_Ip'\frac{p(x)-p(-x)}x\,AW_\lambda
  -\langle B\mathcal{D}_\lambda p,p\rangle-2\lambda\int_Ip'\frac{p}xAW_\lambda
  \\=&-\|\sqrt{A}\,\mathcal{D}_\lambda p\|^2-\lambda\int_Ip'\frac{p(x)+p(-x)}x\,AW_\lambda
  -\langle B\mathcal{D}_\lambda p,p\rangle
  \\=&-\|\sqrt{A}\,\mathcal{D}_\lambda p\|^2-\lambda\int_Ip\,\sigma(p')\,AW_\lambda
  -\langle B\mathcal{D}_\lambda p,p\rangle,
\end{align*}where in the fifth
and the last equalities we used \eqref{2.2} and \eqref{2.3},
respectively. Then
\begin{align}\label{3.24}
  \langle A\,\mathcal{D}_\lambda^2 p, p\rangle+\langle B\mathcal{D}_\lambda p,p\rangle
  &=-\|\sqrt{A}\,\mathcal{D}_\lambda p\|^2.
\end{align}

Therefore, it follows from \eqref{3.22}, \eqref{3.23}, and \eqref{3.24} that
\begin{align*}
  \|\mathcal{L}_n[\,p\,]\|^2
  =&\|A\,\mathcal{D}^2_\lambda p\|^2-B'(0)\|\sqrt{A}\,\mathcal{D}_\lambda p\|^2
  -2\lambda_n^2\|\sqrt{A}\,\mathcal{D}_\lambda p\|^2
  \\&+\lambda_n^4\|p\|^2
  +2\lambda^3B'(0)\|\sqrt{A}\,\sigma(p)\|^2-2\lambda B'(0)\langle Ap',p'(-\cdot)\rangle\ge0.
\end{align*}Hence \eqref{3.21} holds, with the equality  if and only if $\mathcal{L}_n[\,p\,]=0$, which, by Propositions \ref{prop1} and \ref{prop2}, completes the proof of Theorem \ref{thm5}.
\end{proof}

\noindent\textbf{Acknowledgments}
  The authors  were
supported by the National Natural Science Foundation of China
(Project no. 12371098).

\end{document}